\newtheorem{theorem}{Theorem}
\newtheorem{lemma}[theorem]{Lemma}
\newtheorem{prop}[theorem]{Proposition}
\newtheorem{assumption}{Assumption}
\newenvironment{example}[1][Example.]{\begin{trivlist}
\item[\hskip \labelsep {\bfseries #1}]}{\end{trivlist}}
\newenvironment{remark}[1][Remark.]{\begin{trivlist}
\item[\hskip \labelsep {\bfseries #1}]}{\end{trivlist}}
\newcommand\footnoteref[1]{\protected@xdef\@thefnmark{\ref{#1}}\@footnotemark}
\newcommand{\bR}{\mathbb{R}}
\newcommand{\bN}{\mathbb{N}}
\newcommand{\bE}{\mathbb{E}}
\newcommand{\bP}{\mathbb{P}}
\newcommand{\cF}{\mathcal{F}}
\newcommand{\cG}{\mathcal{G}}
\newcommand{\cV}{\mathcal{V}}
\newcommand{\cE}{\mathcal{E}}
\newcommand{\cK}{\mathcal{K}}
\newcommand{\cL}{\mathcal{L}}
\newcommand{\cX}{\mathcal{X}}
\newcommand{\prox}{\mathop{\mathrm{prox}}\nolimits}
\newcommand{\ps}[1]{\langle #1 \rangle}
\newcommand{\1}{\mathbbm 1}
\newcommand{\mZ}{{\mathcal Z}}
\newcommand{\sI}{{\mathsf I}}
\newcommand{\sM}{{\mathsf M}}
\newcommand{\sx}{{\mathsf x}}
\newcommand{\sX}{{\mathsf X}}
\newcommand{\sT}{{\mathsf T}}
\newcommand{\su}{{\mathsf u}}
\newcommand{\sz}{{\mathsf z}}
\newcommand{\mcB}{{\mathscr B}}
\newcommand{\mcF}{{\mathscr F}}
\newcommand{\as}[1]{{\color{black} #1}}
\title{Snake: a Stochastic Proximal Gradient Algorithm for % Structured 
Regularized Problems over Large Graphs} %Regularized problems
\author{Adil Salim, Pascal Bianchi and Walid Hachem 
\thanks{A.~Salim and P.~Bianchi are with the 
LTCI, Télécom ParisTech, Université Paris-Saclay, 75013, Paris, France 
(\texttt{adil.salim, pascal.bianchi@telecom-paristech.fr}).}
\thanks{W.~Hachem is with the CNRS / LIGM (UMR 8049), Universit\'e Paris-Est 
Marne-la-Vallée, France (\texttt{walid.hachem@u-pem.fr}).}
\thanks{This work was supported by the Agence Nationale pour la Recherche,
France, (ODISSEE project, ANR-13-ASTR-0030) and by the Labex Digiteo-DigiCosme
(OPALE project), Universit\'e Paris-Saclay.} 
\thanks{The authors are grateful to TeraLab DataScience for their material support.}
}
\date{\today} 
\begin{document}
\maketitle

\begin{abstract}
A regularized optimization problem over a large unstructured graph is studied,
where the regularization term is tied to the graph geometry.  Typical
regularization examples include the total variation and the Laplacian
regularizations over the graph. When applying the proximal gradient algorithm
to solve this problem, there exist quite affordable methods to implement the
proximity operator (backward step) in the special case where the graph is a
simple path without loops.  In this paper, an algorithm, referred to as
``Snake'', is proposed to solve such regularized problems over general graphs,
by taking benefit of these fast methods.  The algorithm consists in properly
selecting random simple paths in the graph and performing the proximal gradient
algorithm over these simple paths.  This algorithm is an instance of a new general
stochastic proximal gradient algorithm, whose convergence is proven.
Applications to trend filtering and graph inpainting are provided among others.
Numerical experiments are conducted over large graphs.  
\end{abstract}

\section{Introduction}
Many applications in the fields of machine learning
\cite{el2016asymptotic,zhu2003semi,hallac2015network},
signal and image restoration
\cite{chambolle2010introduction,hinterberger2003tube,harchaoui2012multiple}, or
trend filtering
\cite{tibshirani2014adaptive,wang2014trend,padilla2016dfs,hutter2016optimal,landrieu2016cut,tansey2015fast}
require the solution of the following optimization problem. On an undirected 
graph $G=(V,E)$ with no self loops, where $V=\{1,\ldots, N\}$ represents a set of $N$ nodes ($N\in {\mathbb N}^*$) and $E$ is the set of edges, find 
\begin{equation}
\min_{x\in \bR^V} F(x) +R(x,\phi),
\label{eq:prox}
\end{equation} 
where $F$ is a convex and differentiable function on $\bR^V$ representing a 
data fitting term, and the function $x\mapsto R(x,\phi)$ represents a 
regularization term of the form 
\[
R(x,\phi) = \sum_{\{i,j\}\in E } \phi_{\{i,j\}}(x(i),x(j)) \, ,
\]
where $\phi = (\phi_e)_{e\in E}$ is a family of convex and symmetric 
$\bR^2\to\bR$ functions. The regularization term $R(x,\phi)$ will be called a $\phi$-regularization in the sequel. These $\phi$-regularizations often promotes 
the sparsity or the smoothness of the solution. 
For instance, when $\phi_{e}(x,x') = w_{e}|x - x'|$ where 
$w = (w_{e})_{e\in E}$ is a vector of positive weights, the 
function $R(\cdot,\phi)$ coincides with the weighted Total Variation (TV) norm. 
This 
kind of regularization is often used in programming problems over a graph 
which are intended to recover a piecewise constant signal across adjacent nodes
\cite{padilla2016dfs,wang2014trend,hutter2016optimal,landrieu2016cut,
tansey2015fast,barberoTV14,ben2015robust,chen2014signal}.
Another example is the Laplacian regularization 
$\phi_{e}(x, x') = (x - x')^2,$ or its normalized version obtained by 
rescaling $x$ and $x'$ by the degrees of each node in $e$ respectively.
Laplacian regularization tends to 
smoothen the solution in accordance with the graph geometry
\cite{el2016asymptotic,zhu2003semi}.

The Forward-Backward (or proximal gradient) algorithm is one of the most
popular approaches towards solving Problem~\eqref{eq:prox}. This algorithm produces the sequence of iterates 
\begin{equation}
\label{fb-deter} 
x_{n+1} = \prox_{\gamma R(\cdot,\phi)}(x_n - \gamma \nabla F (x_n))\,,
\end{equation} 
where $\gamma > 0$ is a fixed step, and where 
\[ 
\prox_{g}(y) = \arg\min_{x} \left( g(x) + \frac 12\|x-y\|^2 \right) 
\]
is the well-known proximity operator applied to the proper, lower 
semicontinuous (lsc), and convex function $g$ (here $\|\cdot\|$ is the standard 
Euclidean norm). When $F$ satisfies a smoothness assumption, and when $\gamma$ 
is small enough, it is indeed well-known that the sequence $(x_n)$ converges to
a minimizer of~\eqref{eq:prox}, assuming this minimizer exists. 

Implementing the proximal gradient algorithm requires the computation of the
proximity operator applied to $R(\cdot,\phi)$ at each iteration.  When $N$ is
large, this computation is in general affordable only when the graph exhibits a
simple structure. For instance, when $R(\cdot,\phi)$ is the TV norm, the
so-called \emph{taut-string} algorithm is an efficient algorithm for computing
the proximity operator when the graph is one-dimensional
(1D)~\cite{condat2013direct} (see Figure~\ref{fig:graph-presentation}) or when it is a two-dimensional (2D) regular
grid~\cite{barberoTV14}.  Similar observations can be made for the Laplacian
regularization~\cite{graham1997spectral}, where, \emph{e.g.}, the discrete
cosine transform can be implemented. When the graph is large and unstructured,
these algorithms cannot be used, and the computation of the proximity operator
is more difficult (\cite{wang2014trend, spielman2010algorithms}). 

This problem is addressed in this paper. Towards obtaining a simple algorithm,
we first express the functions $F(\cdot)$ and $R(\cdot, \phi)$ as the
expectations of functions defined on a random walks in the graph, paving the
way for a \emph{randomized} version of the proximal gradient algorithm.
Stochastic online algorithms in the spirit of this algorithm are often
considered as simple and reliable procedures for solving high dimensional
machine learning problems, including in the situations where the randomness is
not inherent to these problems~\cite{bottou2010large, bottou2016optimization}.  One specificity of the algorithm
developed here lies in that it reconciles two requirements: on the one hand,
the random versions of $R(\cdot, \phi)$ should be defined on \emph{simple
paths}, \emph{i.e.}, on walks without loops (see Figure~\ref{fig:graph-presentation}), in a way to make benefit of the
power of the existing fast algorithms for computing the proximity operator.
Owing to the existence of a procedure for selecting these simple paths, we term
our algorithm as the ``Snake'' algorithm. On the other hand, the expectations
of the functions handled by the optimization algorithm coincide with $F(\cdot)$
and $R(\cdot,\phi)$ respectively (up to a multiplicative constant), in such a
way that the algorithm does not introduce any bias on the estimates. 

\begin{figure}[ht!]
\[
  \begin{array}{cc}
  \includegraphics[width=.5\linewidth]{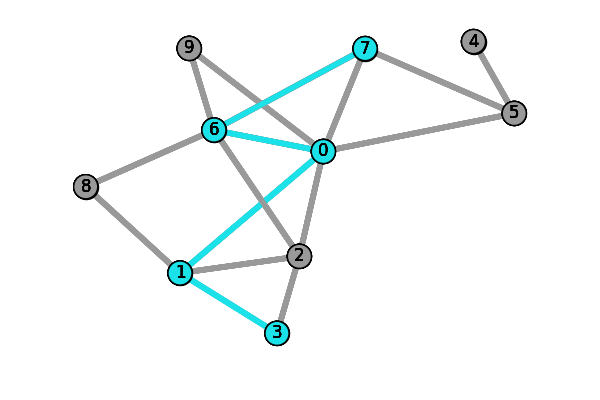} &
  \includegraphics[width=.5\linewidth]{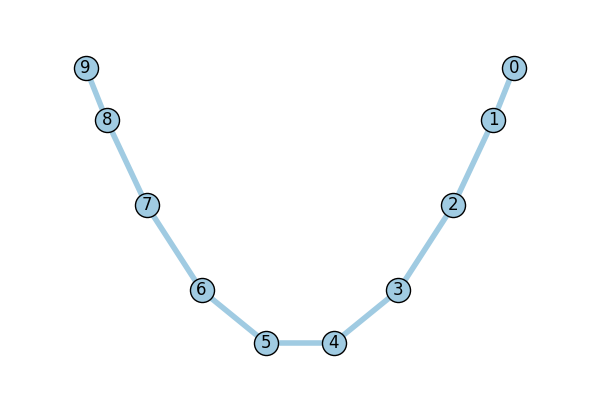}
  \end{array}
\]
\caption{Left: General graph on which is colored the simple path 3-1-0-6-7. Right: 1D-graph.}
\label{fig:graph-presentation}
\end{figure}

There often exists efficient methods to compute the proximity operator of $\phi$-regularization over 1D-graphs. The algorithm Snake randomly selects simple paths in a general graph in order to apply the latter 1D efficient methods over a general graph. 

Actually, the algorithm Snake will be an instance of a new general stochastic
approximation algorithm that we develop in this paper. In some aspects, this
last general stochastic approximation algorithm is itself a generalization of the random Forward-Backward algorithm studied in \cite{bia-hac-16}. 

Before presenting our approach, we provide an overview of the literature
dealing with our problem. First consider the case where
$R(\cdot,\phi)$ coincides with the TV norm. As said above, fast methods exist when
the graph has a simple structure. {We refer the reader to
\cite{barberoTV14} for an overview of iterative solvers of
Problem~\eqref{eq:prox} in these cases.} In \cite{johnson2013dynamic}, the author
introduces a dynamical programming method to compute the proximity operator on
a 1D-graph with a complexity of order $\mathcal O(N)$. 
%The algorithm is in fact a Viterbi algorithm (see
%\cite{rabiner1989tutorial} for an introduction) for a particular
%Hidden Markov Model. 
Still in the 1D case, Condat~\cite{condat2013direct}
revisited recently an algorithm that is due to Mammen and Van De Geer
\cite{mammen1997locally} referred to as the taut-string
algorithm. The complexity of this algorithm is $\mathcal O(N^2)$ in the 
worst-case scenario, and $\mathcal O(N)$ in the most realistic cases. The taut-string algorithm is linked to a total
variation regularized problem in
\cite{davies2001local}. 
% The taut-string algorithm is also
% supported by a rich literature and can be traced back to several
% decades ago, ranging from problems of \emph{isotonic regression}
% \cite{barlow1972statistical}, to \emph{unimodality tests}
% \cite{hartigan1985dip}, or non-parametric statistics
% \cite{mammen1997locally,davies2001local} (see also
% \cite{hinterberger2003tube,steidl2005relations,grasmair2007equivalence}).
This algorithm is generalized to 2D-grids, weighted TV norms and
$\ell^{p}$ TV norms by Barbero and Sra in \cite{barberoTV14}. To
generalize to 2D-grids, the TV regularization can be
written as a sum of two terms on which one can apply 1D methods,
according to \cite{combettes2009iterative} and
\cite{jegelka2013reflection}. Over general graphs, there is no
immediate way to generalize the taut string method. 
% \whnote{Remarques. Tu dis que \cite{wang2014trend} 
% introduit le calcul du proxTV. Mais alors que font Condat, Sra, etc. ? \\
% Que faisons-nous de plus par rapport aux auteurs que tu cites ici ? Enfin, il 
% ne faut ouvlier de citer l'Américain qui t'a écrit et de dire en deux mots 
% ce que nous faisons de plus que lui.}
The problem of
computing the TV-proximity operator over a general graph is addressed in
\cite{wang2014trend}. %{\color{red} PB: Je n'ai pas bien compris l'interet des deux phrases qui suivent, peut-on les supprimer ? This problem arises applying the algorithm~\eqref{eq:fb-deter} to the problem~\eqref{eq:prox} with $R$ equal to the TV norm. It also arises by setting $F(x) = \frac{1}{2}\|x-y\|^2$ (where $y$ is a fixed vector) and $R$ equal to the TV norm in~\eqref{eq:prox}.}  

{The authors of~\cite{wang2014trend} suggest to solve the problem
  using a projected Newton algorithm applied to the dual problem. It
  is observed that, empirically, this methods performs better than
  other concurrent approaches.  As a matter of fact, this statement
  holds when the graph has a moderate size.  As far as large graphs
  are concerned, the iteration complexity of the projected Newton
  method can be a bottleneck. To address this problem, the authors of
\cite{ben2015robust} and~\cite{hallac2015network} propose
  to solve the problem distributively over the nodes using the
  Alternating Direction Method of Multipliers (ADMM).
%In the present paper, we investigate stochastic approximation methods.
% In comparison to~\cite{wang2014trend},
%   our paper is especially adapted to graphs of larger sizes. 
%  Our strategy in
%   this paper is to control the iteration complexity to solve
%   Problem~\eqref{eq:prox} on a single machine.
% Let us mention the approach of \cite{tansey2015fast} which is similar to our method.  The latter algorithm starts to produce iterates after the offline decomposition and can thus be unaffordable for large graphs. Let us also mention the approach of \cite{landrieu2016cut} which relies on a working set method.  
% We finally mention the preconditioned forward backward of~\cite{raguet2015preconditioning} applied to a TV-regularized problem. The algorithm~\cite{raguet2015preconditioning} computed the proximity operator of $\phi_{i,j} : x \mapsto |x(i) - x(j)|$ for all $i,j$ such that $\{i,j\} \in E$. Compared to the latter, an iteration of Snake computes at most $L$ of these proximity operators. Besides, the Snake algorithm computes proximity operators over paths instead of edges.  

%{\color{red} PB:Je pense qu'il faut reprendre le paragraphe qui suit. D'abord, je ne comprends pas tout.  Par ailleurs, il ne suffit pas de dire factuellement quelle est la difference, il faut surtout pointer (gentiment) les faiblesses des autres algorithmes, et expliquer en quoi le notre permet de les eviter. } 
In~\cite{tansey2015fast} the authors propose to compute a decomposition of the graph in 1D-graphs and then solve Problem~\eqref{eq:prox} by means of the TV-proximity operators over these 1D-graphs. Although the decomposition of the graph is fast in many applications, the algorithm~\cite{tansey2015fast} relies on an offline decomposition of the whole graph that needs a global knowledge of the graph topology. The Snake algorithm obtains this decomposition online. 
In \cite{landrieu2016cut}, the authors propose a working set strategy to compute the TV-proximity operator. At each iteration, the graph is cut in two well-chosen subgraphs and a reduced problem of~\eqref{eq:prox} is deduced from this cut. The reduced problem is then solved efficiently. This method has shown speed-ups when $G$ is an image (\textit{i.e} a two dimensional grid). Although the decomposition of the graph is not done during the preprocessing time, the algorithm~\cite{landrieu2016cut} still needs a global knowledge of the graph topology during the iterations. On the contrary, the Snake algorithm only needs a local knowledge. 
Finally, in~\cite{padilla2016dfs}, the authors propose to replace the computation of the TV-proximity operator over the graph $G$ by the computation of the TV-proximity operator over an 1D-subgraph of $G$ well chosen. This produces an approximation of the solution whereas the Snake algorithm is proven to converge to the exact solution.
}

{
In the case where $R(\cdot,\phi)$ is the Laplacian regularization, %the problem of
%computing the proximity operator of $R$ arises applying the algorithm~\eqref{eq:fb-deter} to the problem~\eqref{eq:prox} or setting $F(x) = \frac{1}{2}\|x-y\|^2$ (where $y$ is a fixed vector) in~\eqref{eq:prox}. 
the computation of the proximity operator of $R$ reduces to the
resolution of a linear system $(\mathcal{L}+\alpha I)x = b$ where
$\mathcal L$ is the Laplacian matrix of the graph $G$ and $I$ the
identity matrix. On an 1D-graph, the latter resolution can be done
efficiently and relies on an explicit diagonalization of $\mathcal{L}$
(\cite{graham1997spectral}) by means of the discrete cosine transform,
which take $\mathcal O(N \log (N))$ operations. Over general graphs,
the problem of computing the proximity operator of the Laplacian
regularization is introduced in~\cite{zhu2003semi}. There exist fast
algorithms to solve it due to Spielman and
Teng~\cite{spielman2014nearly}. They are based on recursively
preconditioning the conjugate gradient method using graph theoretical
results~\cite{spielman2010algorithms}. Nevertheless, the preconditionning phase
which may be demanding over very large graphs. Compared
to~\cite{spielman2010algorithms}, our online method Snake requires no
preprocessing step.  }
\section{Outline of the approach and paper organization}
\label{sec:outline} 

The starting point of our approach is a new stochastic optimization 
algorithm that has its own interest. This algorithm will be presented 
succinctly here, and more rigorously in Sec.~\ref{sec:as} below. 
Given an integer $L > 0$, let $\xi = (\xi^1,\ldots, \xi^L)$ be a random vector
where the $\xi^i$ are valued in some measurable space. Consider the problem 
\begin{equation}
\label{eq:probrandom} 
  \min_{x}\  \sum_{i=1}^L\bE_\xi \left[ f_i(x,\xi^i)+g_i(x,\xi^i)\right]
\end{equation} 
where the $f_i(\cdot, \xi^i)$ are convex and differentiable, and the 
$g_i(\cdot, \xi^i)$ are convex. Given $\gamma > 0$, define the operator 
$\sT_{\gamma,i}(x,s) = \prox_{\gamma g_i(\cdot,s)}(x-\gamma \nabla f_i(x,s))$.
Given a sequence $(\xi_n)$ of independent copies of $\xi$, and a sequence 
of positive steps $(\gamma_n) \in \ell^2\setminus\ell^1$, we consider the  
algorithm 
\begin{equation}
\label{eq:conc}
x_{n+1} = \sT_{\gamma_{n+1}}(x_n, \xi_{n+1})\, ,
\end{equation}
where 
\[
\sT_{\gamma}(\cdot, (s^1,\ldots, s^L)) = 
\sT_{\gamma,L}(\cdot,s^L) \circ\dots\circ \sT_{\gamma,1}(\cdot,s^1)
\]
and where $\circ$ stands for the composition of functions: $f \circ g (\cdot) = f(g(\cdot))$. In other words, an iteration of this algorithm consists in the composition of $L$ random 
proximal gradient iterations. The case where $L=1$ was treated in 
\cite{bia-hac-16}. 

Assuming that the set of minimizers of the problem is non empty,
Th.~\ref{th:main} below states that the sequence $(x_n)$ converges almost
surely to a (possibly random) point of this set. The proof of this theorem
which is rather technical is deferred to Sec.~\ref{sec:prf-main}. It follows
the same canvas as the approach of \cite{bia-hac-16}, with the difference that
we are now dealing with possibly different functions $(f_i, g_i)$ and
non-independent noises $\xi^i$ for $i\in\{1,\ldots, L\}$. 

We now want to exploit this stochastic algorithm to develop a simple procedure 
leading to a solution of Problem~\eqref{eq:prox}. This will be done in
Sec.~\ref{sec:algo} and will lead to the Snake algorithm.  The first step is to
express the function $R(\cdot,\phi)$ as the expectation of a function with
respect to a finite random walk.  Given an integer $M > 0$ and a finite walk 
$s = (v_0, v_1, \ldots, v_M)$ of length $M$ on the graph $G$, where 
$v_i \in V$ and $\{ v_i, v_{i+1} \} \in E$, write 
\[ 
R(x, \phi_s) = \sum_{i=1}^M \phi_{\{v_{i-1},
v_i\}}(x(v_{i-1}), x({v_i})) \, .  
\]
Now, pick a node at random with a probability proportional to the degree 
(\emph{i.e.}, the number of neighbors) of this node. Once this node has been 
chosen, pick another one at random uniformly among the neighbors of the first 
node. Repeat the process of choosing neighbors $M$ times, and denote as 
$\xi \in V^{M+1}$ the random walk thus obtained. 
With this construction, we get that
$\frac{1}{|E|}R(x,\phi) = \frac{1}{M} \bE_\xi [ R(x, \phi_{\xi}) ]$
using some elementary Markov chain formalism (see Prop.~\ref{prop:tvE} below).

In these conditions, a first attempt of the use of Algorithm~\eqref{eq:conc} 
is to consider Problem~\eqref{eq:prox} as an instance of
Problem~\eqref{eq:probrandom} with $L = 1$, $f_1(x,\xi) = \frac{1}{|E|} F(x)$, and 
$g_1(x,\xi) = \frac{1}{M} R(x,\phi_\xi)$. Given an independent sequence $(\xi_n)$ of 
walks having the same law as $\xi$ and a sequence $(\gamma_n)$ of steps in 
$\ell^2\setminus\ell^1$, Algorithm~\ref{eq:conc} boils down to the stochastic 
version of the proximal gradient algorithm
\begin{equation}
\label{eq:proxL=1} 
x_{n+1} = \prox_{\gamma_{n+1} \frac{1}{M} R(\cdot,\phi_{\xi_{n+1}})}
    (x_n-\gamma_{n+1} \frac{1}{|E|} \nabla F(x_n)) \, .  
\end{equation} 
By Th.~\ref{th:main} (or by \cite{bia-hac-16}), the iterates $x_n$ converge 
almost surely to a solution of Problem~\eqref{eq:prox}.

However, although simpler than the deterministic algorithm~\eqref{fb-deter},
this algorithm is still difficult to implement for many regularization
functions. As said in the introduction, the walk $\xi$ is often required to be a
simple path. Obviously, the walk generation mechanism described above does not
prevent $\xi$ from having repeated nodes.  A first way to circumvent this
problem would be to generate $\xi$ as a loop-erased walk on the graph.
Unfortunately, the evaluation of the corresponding distribution is notoriously
difficult. The generalization of Prop.~\ref{prop:tvE} to loop-erased walks is
far from being immediate.

As an alternative, we identify the walk $\xi$ with the concatenation of at most
$M$ simple paths of maximal length that we denote as $\xi^1,\ldots, \xi^M$,
these random variables being valued in the space of all walks in $G$ of length
at most $M$:
$$
\xi = (\xi^1,\xi^2,\ldots, \xi^M)\,.
$$
Here, in the most frequent case where the number of simple paths
is strictly less than $M$, the last $\xi^i$'s are conventionally set to a trivial walk, \textit{i.e} a walk with one node and no edge.  
% Consistently with this concatenation, we identify the
% walk $\xi$ with the random vector $(\xi^1,\xi^2,\ldots, \xi^M)$.  
We also
denote as $\ell(\xi^i)$ the length of the simple path $\xi^i$, \emph{i.e.}, the number
of edges in $\xi^i$. We now choose $L=M$, and for $i=1,\ldots, L$, we set
$f_i(x,\xi^i) = \frac{\ell(\xi^i)}{L |E|} F(x)$ and $g_i(x,\xi^i) = \frac{1}{L} R(x, \phi_{\xi^i})$
if $\ell(\xi^i) > 0$, and $f_i(x,\xi^i) = g_i(x,\xi^i) = 0$ otherwise. With
this construction, we show in Sec.~\ref{sec:algo} that 
$\frac{1}{|E|} (F(x) + R(x,\phi)) = \sum_{i=1}^L \bE_\xi [ f_i(x,\xi^i) + g_i(x, \xi^i) ]$
and that the functions $f_i$ and $g_i$ fulfill the general assumptions required
for the Algorithm~\eqref{eq:conc} to converge to a solution of
Problem~\eqref{eq:prox}.  In summary, at each iteration, we pick up a random
walk of length $L$ according to the procedure described above, we split it into
simple paths of maximal length, then we successively apply the proximal
gradient algorithm 
% $\sT_{\gamma_{n+1}, i}(x_n, \x_{n+1}^i)$ 
to these simple paths. 

After recalling the contexts of the taut-string and the Laplacian
regularization algorithms (Sec.~\ref{sec:1D}), we simulate
Algorithm~\eqref{eq:conc} in several application contexts. First, we study the so called graph trend filtering \cite{wang2014trend}, with the parameter $k$ defined in~\cite{wang2014trend} set to one. Then, we consider the graph inpainting problem~\cite{chen2014signal,el2016asymptotic,zhu2003semi}. These contexts are the
purpose of Sec.~\ref{sec:examples}. Finally, a conclusion and some future
research directions are provided in Sec.~\ref{sec-conc}. 

\section{A General Stochastic Proximal Gradient Algorithm}
\label{sec:as}

\emph{Notations.}  We denote by $(\Omega,\cF,\bP)$ a probability space
and by $\bE [\cdot]$ the corresponding expectation.  We let $(\Xi,\cX)$ be an
arbitrary measurable space.  We denote $\sX$ some Euclidean space and
by $\mcB(\sX)$ its Borel $\sigma$-field.  A mapping $f:\sX\times
\Xi\to \bR$ is called a normal convex integrand if $f$ is
$\mcB(\sX)\otimes \cX$-measurable and if $f(\,.\,,s)$ is convex for
all $s\in \Xi$ \cite{roc-69(mes)}.

\subsection{Problem and General Algorithm}

In this section, we consider the general problem 
\begin{equation}
  \label{eq:general}
  \min_{x\in \sX}\  \sum_{i=1}^L\bE\left[ f_i(x,\xi^i)+g_i(x,\xi^i)\right]
\end{equation}
where $L$ is a positive integer, the $\xi^i:\Omega\to \Xi$ are 
random variables (r.v.), and the functions $f_i: {\sX}\times \Xi\to \bR$ and 
$g_i: {\sX}\times \Xi\to \bR$ satisfy the following assumption:
\begin{assumption} 
\label{fg}
The following holds for all $i\in \{1,\ldots, L\}$:
  \begin{enumerate}
  \item  The $f_i$ and $g_i$ are normal convex integrands.
  \item\label{f,g:integ}
   For every $x\in \sX$, $\bE[|f_i(x,\xi^i)|]<\infty$ and 
   $\bE[|g_i(x,\xi^i)|]<\infty$.
\item For every $s\in \Xi$, $f_i(\cdot,s)$ is differentiable. We denote as
   $\nabla f_i(\cdot,s)$ its gradient w.r.t.~the first variable. 
  \end{enumerate}
\end{assumption}

\begin{remark}
In this paper, we assume that the functions $g_i(\cdot, \xi)$ have a full domain
for almost all $\xi$. This assumption can be relaxed with some effort, along
the ideas developed in \cite{bia-hac-16}. 
\end{remark}

For every $i=1,\ldots,L$ and every $\gamma>0$, we introduce the mapping
$\sT_{\gamma,i}: {\sX}\times \Xi\to \sX$ defined by
\[
\sT_{\gamma,i}(x,s) = \prox_{\gamma g_i(\cdot,s)}(x-\gamma \nabla f_i(x,s))\,.
\] 
We define $\sT_{\gamma}: {\sX} \times \Xi^L \to \sX$ by
$$
\sT_{\gamma}(\cdot,(s^1,\ldots,s^L)) = 
 \sT_{\gamma,L}(\cdot,s^L)\circ\dots\circ \sT_{\gamma,1}(\cdot,s^1)\,.
$$
Let $\xi$ be the random vector $\xi = (\xi^1,\ldots,\xi^L)$ with values
in $\Xi^L$ and let $(\xi_n:n\in\bN^*)$ be a sequence of i.i.d. copies
of $\xi$, defined on the same probability space
$(\Omega,\cF,\bP)$. For all $n \in\bN^*,$ $\xi_n = (\xi_n^1,\ldots,\xi_n^L)$.  
Finally, let $(\gamma_n)$ be a positive sequence. Our aim is to analyze the 
convergence of the iterates $(x_n)$ recursively defined by:
\begin{equation}
x_{n+1} = \sT_{\gamma_{n+1}}(x_n,\xi_{n+1})\,,
\label{eq:spa}
\end{equation}
as well as the intermediate variables $\bar x_{n+1}^i$ ($i=0,\ldots,L$) defined 
by $\bar x_{n+1}^0 = x_n$, and  
\begin{equation}
\bar x_{n+1}^i = \sT_{\gamma_{n+1},i}(\bar x_{n+1}^{i-1},\xi_{n+1}^i)\, , 
\quad  i=1,\ldots,L\, . 
\label{eq:xintermediate}
\end{equation}
In particular, $x_{n+1} = x_{n+1}^{L} = T_{\gamma_{n+1}, L}(\bar x_{n+1}^{L-1},\xi_{n+1}^L)$.

In the special case where the functions $g_i$, $f_i$ are all constant with
respect to $s$ (the algorithm is deterministic), the above iterations were 
studied by Passty in \cite{pas-79}. In the special case where $L=1$,
the algorithm boils down to the stochastic Forward-Backward algorithm, whose
detailed convergence analysis can be found in \cite{bia-hac-16} (see also
\cite{bia-16}, and \cite{wan-ber-15} as an earlier work). In this case, the
iterates take the simpler form 
\begin{equation}
x_{n+1} = \prox_{\gamma_{n+1} g_1(\cdot,\xi_{n+1})}
       (x_n-\gamma_{n+1} \nabla f_1(x_n,\xi_{n+1}))\,,
\label{eq:sfb}
\end{equation}
and converge a.s.~to a minimizer of $\bE[f_1(x,\xi)+g_1(x_,\xi)]$ under the
convenient hypotheses. 

It is worth noting that the present algorithm~(\ref{eq:spa}) cannot be written
as an instance of (\ref{eq:sfb}). Indeed, the operator $\sT_\gamma$ is a composition
of $L$ (random) operators, whereas the stochastic forward backward algorithm (\ref{eq:sfb})
has a simpler structure. This composition raises technical difficulties that need to be specifically addressed.

\subsection{Almost sure convergence}

We make the following assumptions.
\begin{assumption}
  \label{step}
The positive sequence $(\gamma_n)$ satisfies the conditions
$$
\sum \gamma_n =+\infty\ \text{ and }\ \sum\gamma_n^2<\infty\, , 
$$
(\emph{i.e.}, $(\gamma_n) \in\ell^2\setminus\ell^1$).  
Moreover, $\frac{\gamma_{n+1}}{\gamma_n} \to 1$
\end{assumption}

\begin{assumption}
\label{f-Lip} The following holds for all $i\in \{1,\ldots, L\}$:
\begin{enumerate}
% \item For all $x \in \sX$, there exists $\varepsilon > 0$ {\color{red} donc $\varepsilon$ depend de $x$ ?} s.t.
% $$
% \bE(\|\nabla f_i(x,\xi_i)\|^{2+\varepsilon})<\infty\,.
% $$
\item There exists a measurable map $K_i : \Xi \to \bR_+$ s.t. the following holds $\bP$-a.e.: for all $x,y$ in $\sX$,
$$
\| \nabla f_i(x,\xi_i) - \nabla f_i(y,\xi_i) \| \leq K_i(\xi_i) \| x - y \|\,.
$$
\item For all $\alpha > 0$, $\bE[K_i(\xi_i)^\alpha]<\infty$.
\end{enumerate}
\end{assumption} 

We denote by $\mZ$ the set of minimizers of Problem \eqref{eq:general}. Thanks
to Ass.~\ref{fg}, the qualification conditions hold, 
ensuring that a point $x_\star$ belongs to $\mZ$ \emph{iff} 
$$
0\in  \sum_{i=1}^L \nabla \bE[f_i(x_\star,\xi^i)] + \partial \bE[g_i(x_\star,\xi^i)]\,.
$$
The (sub)differential and the expectation operators can be interchanged 
\cite{roc-wet-82}, and the above optimality condition also reads
\begin{equation}
0\in  \sum_{i=1}^L \bE[\nabla f_i(x_\star,\xi^i)] + \bE[\partial g_i(x_\star,\xi^i)]\,,
\label{eq:optimality}
\end{equation}
where $\bE[\partial g_i(x_\star,\xi^i)]$ is the Aumann expectation of the
random set $\partial g_i(x_\star, \xi^i)$, defined as the set of expectations 
of the form $\bE[\varphi_i(\xi^i)]$, where $\varphi_i : \Xi \to \sX$ is a 
measurable map s.t. $\varphi_i(\xi^i)$ is integrable and
\begin{equation}
\varphi_i(\xi^i)\in \partial g_i(x_\star,\xi^i)\ \bP\text{-a.e.},\ \forall i .
\label{eq:representation}
\end{equation}
Therefore, the optimality condition~\eqref{eq:optimality}
means that there exist $L$ integrable mappings $\varphi_1,\ldots,\varphi_L$ satisfying \eqref{eq:representation} and s.t.
\begin{equation}
  \label{eq:optimality-2}
  0 =  \sum_{i=1}^L \bE[\nabla f_i(x_\star,\xi^i)] + \bE[\varphi_i(\xi^i)]\,.
\end{equation}
When (\ref{eq:representation})-(\ref{eq:optimality-2}) hold, we say that the family 
$(\nabla f_i(x_\star,\xi^i), \varphi_i(\xi^i))_{i=1,\ldots,L}$ is a \emph{representation} of the minimizer $x_\star$.
In addition, if for some $\alpha\geq 1$ and every $i=1,\ldots,L$,  $\bE[\|\nabla f_i(x_\star,\xi^i)\|^\alpha]<\infty$ and 
$\bE[\|\varphi(\xi^i)\|^\alpha]<\infty$, we say that the minimizer $x_\star$ admits a $\alpha$-integrable representation.
\begin{assumption}
\label{selec}
\begin{enumerate}
\item The set $\mZ$ is not empty. 
\item For every $x_\star \in \mZ$, there exists $\varepsilon>0$ s.t. $x_\star$ 
admits a $(2+\varepsilon)$-integrable representation 
$(\nabla f_i(x_\star,\xi^i), \varphi_i(\xi^i))_{i=1,\ldots,L}$. 
\end{enumerate}
\end{assumption} 
We denote by $\partial g_{i}^0(x, \xi^i)$ the least norm element in $\partial g_i(x,\xi^i)$. 
%\pbnote{Ok on ne s'emmerde pas a parler de mesurabilite ici, ca me convient}
\begin{assumption}
\label{bnd-mom} 
For every compact set $\cK \subset \sX$, there exists $\eta > 0$ such 
that for all $i=1,\ldots,L$,
\[ 
\sup_{x\in \cK}\bE [\| \partial g_{i}^0(x, \xi^i) \|^{1+\eta}] < \infty\,.
\]
\end{assumption} 

We can now state the main result of this section, which will be proven in 
Sec.~\ref{sec:prf-main}. 
\begin{theorem}
\label{th:main} 
Let Ass.~\ref{fg}--\ref{bnd-mom} hold true. 
%Let $(\xi_n)$ be a sequence of iid copies of $\xi$.
There exists a r.v. $X_\star$ s.t. $\bP(X_\star\in \mZ)=1$ and s.t.
$(x_n)$ converges a.s. to $X_\star$ as $n\to \infty$.
Moreover, for every $i=0,\ldots,L-1$, $\bar x_n^i$ converges a.s. to $X_\star$.
\end{theorem}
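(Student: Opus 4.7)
The plan is to carry out a stochastic quasi-Fej\'er analysis in the spirit of \cite{bia-hac-16}, generalized to the $L$-fold composition of forward-backward sub-steps with correlated noises $\xi^1,\ldots,\xi^L$. Fix $x_\star \in \mZ$ equipped with a $(2+\varepsilon)$-integrable representation $(\nabla f_i(x_\star,\xi^i),\varphi_i(\xi^i))_{i=1,\ldots,L}$ and set $\cF_n=\sigma(\xi_1,\ldots,\xi_n)$. The goal is to establish a Robbins-Siegmund-type inequality of the form
\[
\bE[\|x_{n+1}-x_\star\|^2\mid\cF_n]\le (1+u_n)\|x_n-x_\star\|^2-\gamma_{n+1} U_n(x_\star)+v_n,
\]
with $\sum_n u_n<\infty$, $\sum_n v_n<\infty$ almost surely, and $U_n(x_\star)\ge 0$ a non-negative ``energy'' term that forces subsequential limits of $(x_n)$ into $\mZ$.

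\emph{Step 1: one-iteration decomposition.} At sub-step $i$ of iteration $n+1$, the prox optimality gives $\bar x_{n+1}^i=\bar x_{n+1}^{i-1}-\gamma_{n+1}(\nabla f_i(\bar x_{n+1}^{i-1},\xi_{n+1}^i)+h_{n+1}^i)$ with $h_{n+1}^i\in\partial g_i(\bar x_{n+1}^i,\xi_{n+1}^i)$. Expanding $\|\bar x_{n+1}^i-x_\star\|^2$ and telescoping over $i=1,\ldots,L$ produces a relation of the form
\[
\|x_{n+1}-x_\star\|^2\le \|x_n-x_\star\|^2-2\gamma_{n+1}\sum_{i=1}^L A_{n+1}^i+\gamma_{n+1}^2 B_{n+1},
\]
where $A_{n+1}^i$ splits into (a) a monotonicity-driven nonnegative part $\langle \nabla f_i(\bar x_{n+1}^{i-1},\xi_{n+1}^i)-\nabla f_i(x_\star,\xi_{n+1}^i)+h_{n+1}^i-\varphi_i(\xi_{n+1}^i),\, \bar x_{n+1}^{i-1}-x_\star\rangle$ and (b) an ``innovation'' term $\langle \nabla f_i(x_\star,\xi_{n+1}^i)+\varphi_i(\xi_{n+1}^i),\, \bar x_{n+1}^{i-1}-x_\star\rangle$; the remainder $B_{n+1}$ gathers $\sum_i\|\nabla f_i(\bar x_{n+1}^{i-1},\xi_{n+1}^i)+h_{n+1}^i\|^2$.

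\emph{Step 2: reduction to $x_n$ and conditioning.} To make conditional expectations tractable, each intermediate iterate $\bar x_{n+1}^{i-1}$ must be replaced by $\cF_n$-measurable $x_n$. The displacement $\|\bar x_{n+1}^{i-1}-x_n\|$ is of order $\gamma_{n+1}$ times sums of $\|\nabla f_j(x_n,\xi_{n+1}^j)\|$ and $\|\partial g_j^0(x_n,\xi_{n+1}^j)\|$ for $j<i$, so replacement inside the innovation terms generates only $O(\gamma_{n+1}^2)$ summable errors, controlled through Ass.~\ref{f-Lip} (Lipschitzness, any-order moments of $K_i(\xi^i)$) and Ass.~\ref{bnd-mom} ($(1+\eta)$-moment of least-norm subgradients), combined with the $(2+\varepsilon)$-integrability of the representation. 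After this reduction, taking $\bE[\,\cdot\mid\cF_n]$ collapses the innovation sum to $\langle \sum_i\bE[\nabla f_i(x_\star,\xi^i)+\varphi_i(\xi^i)],\, x_n-x_\star\rangle=0$ by~\eqref{eq:optimality-2}, while the nonnegative monotone part of $\sum_i A_{n+1}^i$ defines the energy $U_n(x_\star)$.

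\emph{Step 3: Robbins-Siegmund and identification of the limit.} Applying the Robbins-Siegmund lemma to the quasi-Fej\'er inequality yields, on a probability-one event depending on $x_\star$, convergence of $\|x_n-x_\star\|$ and $\sum_n\gamma_{n+1}U_n(x_\star)<\infty$. Since $\sum\gamma_n=\infty$ by Ass.~\ref{step}, $\liminf_n U_n(x_\star)=0$, which combined with the closedness of the graph of $x\mapsto\sum_i(\bE[\nabla f_i(x,\xi^i)]+\bE[\partial g_i(x,\xi^i)])$ forces every cluster point of $(x_n)$ into $\mZ$. To upgrade subsequential convergence to full a.s.~convergence of $(x_n)$ to a single random $X_\star\in\mZ$, I would run the above argument simultaneously along a countable dense subset of $\mZ$ (available because $\sX$ is Euclidean and $\mZ$ is closed convex) to obtain a common almost-sure event on which $\|x_n-y\|$ converges for every $y$ in this dense set; an Opial-type argument then forces uniqueness of the cluster point. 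Finally, since $\|\bar x_n^i-x_n\|=O(\gamma_n)\to 0$ a.s., each intermediate sequence $(\bar x_n^i)$ also converges a.s.~to $X_\star$.

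\emph{Main obstacle.} The principal technical difficulty lies in Step~2: the intermediate iterates $\bar x_{n+1}^{i-1}$ are measurable with respect to $\sigma(\cF_n,\xi_{n+1}^1,\ldots,\xi_{n+1}^{i-1})$ and are entangled with the very noise variables appearing in $A_{n+1}^i$, so the displacement/noise products require joint moment control across the $L$ sub-steps. This is the reason behind the strong any-order-moment hypothesis on $K_i(\xi^i)$ in Ass.~\ref{f-Lip}(2), and is where the argument departs most sharply from the $L=1$ case treated in~\cite{bia-hac-16}.
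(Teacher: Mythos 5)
Your Steps~1--2 track the paper's Prop.~\ref{fejer} quite faithfully: the same quadratic expansion around $x_\star$ with its $(2+\varepsilon)$-integrable representation, the same need to replace $\bar x_{n+1}^{i-1}$ by $x_n$ at cost $O(\gamma_{n+1}^2)$, and the same moment-control induction across sub-steps that motivates the all-order moments of $K_i(\xi^i)$ (this is Lem.~\ref{bnd-nf-ng} and the induction on the monomials $P_k$ in the appendix). Two local issues: your ``monotonicity-driven nonnegative part'' is not nonnegative as written, because $h_{n+1}^i\in\partial g_i(\bar x_{n+1}^i,\xi_{n+1}^i)$ is paired with $\bar x_{n+1}^{i-1}-x_\star$ rather than with $\bar x_{n+1}^i-x_\star$; the paper fixes this by pairing $\nabla g_i^\gamma-\varphi_i$ with $\prox_{\gamma g_i}(\cdot)-x_\star$ via \eqref{yos-mon} and absorbing the residual $\gamma(\nabla f_i+\nabla g_i^\gamma)$ terms. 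Also note that $\|\bar x_n^i-x_n\|$ is \emph{not} deterministically $O(\gamma_n)$; its a.s.\ convergence to $0$ is obtained in the paper only through $\sum_n\gamma_{n+1}^2\bE[\|\nabla f_i\|^2+\|\nabla g_i^\gamma\|^2]<\infty$ and Borel--Cantelli.

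The genuine gap is in Step~3, which is also where you depart from the paper. The paper does \emph{not} retain an energy term in its Robbins--Siegmund inequality (the monotonicity terms $A_2$ and the first part of $A_3$ are simply discarded); it identifies the limit by an entirely different mechanism, namely showing that the interpolated process is an a.s.\ asymptotic pseudo-trajectory \eqref{as-apt} of the differential inclusion \eqref{eq:di-intro} and invoking \cite[Cor.~3.2]{bia-hac-16}. Your alternative --- $\liminf_n U_n(x_\star)=0$ plus ``closedness of the graph'' of $x\mapsto\sum_i(\nabla F_i(x)+\partial G_i(x))$ --- does not work as stated: vanishing of the monotonicity gap $\ps{a_{n_k},x_{n_k}-x_\star}$ along a subsequence is not a graph-closedness statement, and to conclude that a cluster point $\bar x$ lies in $\mZ$ you must (i) extract weak $\cL^{1+\eta}(\mu_i)$ limits of the random subgradients evaluated at the drifting points $\bar x_{n_k+1}^{i}$ and identify them as measurable selections of $\partial g_i(\bar x,\cdot)$, and (ii) exploit the equality case in the convexity inequalities for \emph{each} $i$ together with the representation \eqref{eq:optimality-2} to deduce $\sum_i\bE[(f_i+g_i)(\bar x,\xi^i)]=\sum_i\bE[(f_i+g_i)(x_\star,\xi^i)]$. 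Point~(i) is precisely the Banach--Alaoglu/weak-compactness argument that occupies the second half of Sec.~\ref{sec:prf-main}; without it (or an equivalent), the identification step is unsupported, and this is the part of the proof that cannot be waved through. The Opial-type upgrade from one cluster point in $\mZ$ to full a.s.\ convergence is fine once (i)--(ii) are supplied.
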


\section{The Snake Algorithm}
\label{sec:algo}

\subsection{Notations}
\label{sec:notations}
% From $G$ we consider the graph $G'$ as follows. The set of nodes of $G'$ is $E.$ The set of edges of $G'$ is the set of all pairs $\{a,e\},$ where $a,e \in E,$ such that there is exactly one element in the set $a \cap e.$ 

Let $\ell \geq 1$ be an integer. We refer to a walk of length $\ell$ over the
graph $G$ as a sequence $s=(v_0,v_1,\ldots,v_{\ell})$ in $V^{\ell+1}$ such
that for every $i=1,\ldots,\ell$, the pair $\{v_{i-1},v_{i}\}$ is an edge of
the graph. A walk of length zero is a single vertex. 

We shall often identify $s$ with the graph $\cG(s)$
whose vertices and edges are respectively given by the sets $\cV(s)=\{v_0,\ldots,v_{\ell}\}$ and 
$\cE(s) = \{\{v_0,v_1\},\ldots,\{v_{\ell-1},v_{\ell}\}\}$.

Let $L \geq 1.$ We denote by $\Xi$ the set of all walks over $G$ with length $\leq L.$ This is a finite set. Let $\cX$ be the set of all subsets of $\Xi.$ We consider the measurable space $(\Xi,\cX).$ 

Let $s=(v_0,v_1,\ldots,v_{\ell}) \in \Xi$ with $0 < \ell \leq L.$ We
abusively denote by $\phi_s$ the family of functions
$(\phi_{\{v_{i-1},v_{i}\}})_{i = 1,\ldots,\ell}.$ We refer to the
$\phi_{s}-$regularization of $x$ as the $\phi_s-$regularization on the
graph $s$ of the restriction of $x$ to $s$ that is
$$
R(x,\phi_s) = \sum_{i=1}^\ell \phi_{\{v_{i-1},v_{i}\}}(x(v_{i-1}), x(v_{i}))\,.
$$
Besides, $R(x,\phi_s)$ is defined to be $0$ if $s$ is a single vertex (that is $\ell = 0$).
% For every walk $s$ and every $u \in \bR^{V}$, we define
% $$
% \Pi(u,s,\rho) = \prox_{\rho R(.,\phi_s)}(u)\,.
% $$
% We say that a walk $s$ is a \emph{simple path} if all the $v_i$'s involved in $s$ are different or if $s$ is empty.
% Throughout the paper, we assume that when $s$ is a simple path, the
% computation of $\prox_{R(.,\phi_s)}$ can be done easily. If $s$ is empty this assumption is always true since $\prox_{R(.,\phi_s)}(x) = x.$ 

We say that a walk is a \emph{simple path} if there is no repeated
node \textit{i.e}, all elements in $s$ are different or if $s$ is a single vertex.
Throughout the paper, we assume that when $s$ is a simple path, the
computation of $\prox_{R(.,\phi_s)}$ can be done easily.

\subsection{Writing the Regularization Function as an Expectation}

One key idea of this paper is to write the function $R(x,\phi)$ as an
expectation in order to use a stochastic approximation algorithm, as described
in Sec.~\ref{sec:as}.

Denote by $\deg(v)$ the degree of the node $v \in V$, \emph{i.e.}, the number
of neighbors of $v$ in $G$. Let $\pi$ be the probability measure on $V$ 
defined as 
\[
\pi(v) = \frac{\deg(v)}{2 |E|} , \quad v \in V \, . 
\] 
Define the probability transition kernel $P$ on $V^2$ as 
$P(v,w) = \1_{\{v,w\} \in E} / \deg(v)$ if $\deg(v) > 0$, and 
$P(v,w) = \1_{v=w}$ otherwise, where $\1$ is the indicator function.

We refer to a Markov chain (indexed by $\bN$) over $V$ with initial distribution $\pi$ and transition kernel $P$ as an infinite random walk over $G.$ 
Let $(v_{k})_{k \in \bN}$ be a infinite random walk over $G$ defined on the canonical probability space $(\Omega,\cF,\bP),$ with $\Omega = V^{\bN}.$ The first node $v_0$ of this walk is randomly chosen in $V$ according to the distribution $\pi.$ The other 
nodes are drawn recursively according to the conditional probability 
$\bP(v_{k+1} = w \, | \, v_k ) = P(v_k,w)$. In other words,
conditionally to $v_k$, the node $v_{k+1}$ is drawn uniformly from the 
neighborhood of $v_k$.  
% If $v$ and $w$ are two random variables on $(\Omega,\cF,\bP)$ with values in $V,$ we can consider the random variable $\{v,w\}$ defined on $(\Omega,\cF,\bP)$ by $\{v,w\} (\omega) = \{v(\omega),w(\omega)\}$ for all $\omega \in \Omega.$
Setting an integer $L \geq 1$, we define the random variable $\xi$ from $(v_{k})_{k \in \bN}$ as $\xi = (v_0,v_1,\ldots,v_{L}).$ 
% \pbnote{La notation $\xi$ est deja reservee a $\xi=(\xi^i)_i$, c'est a dire qu'on ne peut normalement definir $\xi$ que si les $\xi^i$ sont deja definis.
% Ce n'est pas tres rigoureux, on pourrait changer $\xi$ en $\chi$, mais je pense que ca reste clair tout de meme...}
\begin{prop}
\label{prop:tvE}
For every $x\in \bR^V$,
\begin{equation}
\frac{1}{|E|} R(x,\phi) = \frac{1}{L}\,\bE[R(x, \phi_\xi)]\,.\label{eq:tvE}
\end{equation}
\end{prop}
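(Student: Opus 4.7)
The plan is to exploit the fact that $\pi$ is the stationary (and reversible) distribution of the Markov chain with kernel $P$, and then linearity of expectation plus the symmetry of the $\phi_e$'s.

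First, I would verify reversibility: for $v,w\in V$ with $\deg(v),\deg(w)>0$,
$$\pi(v) P(v,w) = \frac{\deg(v)}{2|E|}\cdot \frac{\1_{\{v,w\}\in E}}{\deg(v)} = \frac{\1_{\{v,w\}\in E}}{2|E|},$$
which is symmetric in $(v,w)$. Nodes with $\deg(v)=0$ have $\pi(v)=0$ and are never visited, so they can be harmlessly excluded. Summing over $w$ shows that $\pi$ is stationary, hence the marginal law of $v_k$ equals $\pi$ for every $k\in\bN$, and the joint law of $(v_{k-1},v_k)$ equals $\pi(v)P(v,w)$.

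Next I would expand by linearity:
$$\bE[R(x,\phi_\xi)] = \sum_{k=1}^{L}\bE\!\left[\phi_{\{v_{k-1},v_k\}}(x(v_{k-1}),x(v_k))\right].$$
Each term of the sum is, by the joint law computed above,
$$\sum_{v,w\in V}\frac{\1_{\{v,w\}\in E}}{2|E|}\,\phi_{\{v,w\}}(x(v),x(w)).$$
Splitting the sum over unordered edges and using that each edge $\{v,w\}\in E$ contributes two ordered pairs $(v,w)$ and $(w,v)$, together with the symmetry assumption $\phi_e(a,b)=\phi_e(b,a)$, this equals
$$\sum_{\{v,w\}\in E}\frac{2}{2|E|}\,\phi_{\{v,w\}}(x(v),x(w)) = \frac{1}{|E|}R(x,\phi).$$
Thus every one of the $L$ terms equals $R(x,\phi)/|E|$, and summing yields $\bE[R(x,\phi_\xi)] = \frac{L}{|E|}R(x,\phi)$, which rearranges to \eqref{eq:tvE}.

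The only subtle point, and arguably the ``main obstacle,'' is the bookkeeping at the two endpoints: one must remember that $\phi_{s}$ involves exactly $L$ edges (not $L+1$), corresponding to the $L$ consecutive pairs $(v_{k-1},v_k)$ in $\xi=(v_0,\ldots,v_L)$, and that the symmetry of $\phi_e$ is what removes the factor of $2$ between directed and undirected counts. Beyond that, the argument is a one-step stationarity calculation plus linearity of expectation.
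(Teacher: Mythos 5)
Your proof is correct and follows essentially the same route as the paper's: establish that $\pi$ is invariant and that the joint law of $(v_{k-1},v_k)$ is $\1_{\{v,w\}\in E}/(2|E|)$, then conclude each of the $L$ summands equals $\frac{1}{|E|}R(x,\phi)$ using the symmetry of the $\phi_e$. Your version merely spells out the reversibility computation and the ordered-versus-unordered edge counting that the paper leaves implicit.
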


\begin{proof}
% Using the identity 
% \[
% \bP(v_{1} = w) = \sum_{v\in V} \bP(v_0 = v) P(v,w) \, ,
% \]
% together with $\bP(v_0 = v) = \pi(v)$, and recalling that the graph is 
% undirected, it is easy to see $v_1 \sim \pi$, showing 
It is straightforward to show that $\pi$ is an 
invariant measure of the Markov chain $(v_k)_{k \in \bN}$.
Moreover, $\bP(v_{k} = w, v_{k-1} = v) = \pi(v) P(v,w) 
= \1_{\{v,w\}\in E} / (2|E|)$, leading to the identity
\[
\bE \left[\phi_{\{v_{k-1},v_{k}\}}(x(v_{k-1}),x(v_{k})) \right] = \frac{1}{|E|} R(x,\phi)\,,
\]
which completes the proof by symmetry of $\phi_e, \forall e \in E$. 
\end{proof}
%\begin{proof}
%First, let $({\bs v}_i)_{i}$ the stochastic process whose law is specified by
%the equations \eqref{eq:initRW} and \eqref{eq:kernelRW}. This process is a
%Markov chain on $V$, and it is easy to show that the function 
%$\frac{d}{2|E|}$ describes an invariant probability for this Markov chain.
%Since the
%initial distribution of $({\bs v}_i)_{i}$ is $\frac{d}{2|E|},$ the Markov chain
%$({\bs v}_i)_{i}$ is stationary, and for all $i,$ $${\bs v}_i \sim
%\frac{d}{2|E|}$$  Then, for all $i,$ $$\{{\bs v}_i,{\bs v}_{i+1}\} \sim
%\mathrm{Uniform}(E).$$ Actually, for all edge $e = \{v,w\},$
%\begin{eqnarray*}
%   \lefteqn{\bP(\{{\bs v}_i,{\bs v}_{i+1}\} = e)} \\
%   & = & \bP({\bs v}_i = v, {\bs v}_{i+1} = w) + \bP({\bs v}_i = w, {\bs v}_{i+1} = v) \\
%   & = & \frac{d(v)}{2|E|} \frac{1}{d(v)} + \frac{d(w)}{2|E|} \frac{1}{d(w)} \\
%   & = & \frac{1}{|E|}.
%\end{eqnarray*}
%Finally, for all $i,$
%$$\bE \left(|x({\bs v}_i) - x({\bs v}_{i+1})| \right) = \frac{1}{|E|} \tv(x,G)$$ and $$\bE(\tv(x_\xi, \cG(\xi)))\ = \frac{L}{|E|} \tv(x,G)$$ by %%%%%linearity, since the length of $\cG(\xi)$ is $L$.   
%\end{proof}

This proposition shows that Problem~\eqref{eq:prox} is written equivalently
\begin{equation}
\label{eq:pbequiv}
\min_{x\in \bR^V} \frac{1}{|E|}F(x) + \,\bE[\frac{1}{L}R(x, \phi_\xi)].
\end{equation}
Hence, applying the stochastic proximal gradient algorithm to solve~\eqref{eq:pbequiv} leads to a new algorithm to solve~\eqref{eq:prox}, which was mentioned in Sec.~\ref{sec:outline}, Eq.~\eqref{eq:proxL=1}: 
\begin{equation}
\label{eq:fbsto}
x_{n+1} = \prox_{\gamma_{n+1} \frac{1}{L} R(\cdot,\phi_{\xi_{n+1}})}
    (x_n-\gamma_{n+1} \frac{1}{|E|} \nabla F(x_n)) \,.  
\end{equation}

% The hyper parameter $L$ appears in the equivalent
% problem~\eqref{eq:pbequiv} and in the application of the stochastic
% proximal gradient algorithm~\eqref{eq:fbsto}. Here is a short
% discussion about the value to give to the parameter $L$. First, note
% that the algorithm~\eqref{eq:proxL=1} is converging to a solution for
% any $L > 1$ because of Prop.~\ref{prop:tvE}. At each iteration
% of~\eqref{eq:proxL=1}, the proximity operator of $\frac{1}{L}R(\cdot,
% \phi_\xi)$ is computed instead of the proximity operator of $R(\cdot,
% \phi)$ in the deterministic algorithm~\eqref{fb-deter}. The more is
% $\frac{1}{L}R(\cdot, \phi_\xi)$ closed to $R(\cdot, \phi)$, the more
% precise is the algorithm~\eqref{eq:proxL=1}. Indeed, one can show
% using the ergodic theorem (a generalization of the law of large
% numbers) that, if $G$ is connected, $\frac{1}{L}R(x, \phi_\xi)
% \rightarrow_{L \to +\infty} \frac{1}{|E|} R(\cdot, \phi).$ \asnote{Ca
%   doit être vrai, il faudra essayer de l'écrire un jour} Hence, the
% parameter $L$ controls how good is the approximation of $\frac{1}{|E|}
% R(\cdot, \phi)$ by $\frac{1}{L} R(\cdot, \phi_\xi)$. Therefore, it can
% be tempting to take a large value of $L$. But $L$ also controls the
% complexity of one iteration of~\eqref{eq:proxL=1} since, the more $L$
% is large, the more the computation of the proximity operator of
% $\frac{1}{L}R(x, \phi_\xi)$ involves nodes. 

Although the iteration complexity is reduced in~\eqref{eq:fbsto} compared to~\eqref{fb-deter}, the computation of the proximity operator of the $\phi$-regularization over the random subgraph $\xi_{n+1}$ in the algorithm~\eqref{eq:fbsto} can be difficult to implement. This is due to the
possible presence of loops in the random walk $\xi$. As an alternative, we split $\xi$ into several simple paths. We will then replace the proximity operator over $\xi$ by the series of the proximity operators over the simple paths induced by $\xi$, which are efficiently computable.

\subsection{Splitting $\xi$ into Simple Paths} 

Let $(v_{k})_{k \in \bN}$ be an infinite random walk on $(\Omega,\cF,\bP)$. 
We recursively define a sequence of stopping time $(\tau_i)_{i\in \bN}$ as $\tau_0=1$ and
for all $i\geq 0$,
$$
\tau_{i+1} = \min\{k \geq \tau_i:\,v_{k}\in \{v_{\tau_i - 1},\ldots,v_{k-1}\}\}
$$
if the above set is nonempty, and $\tau_{i+1}= +\infty$ otherwise. We now define the stopping times $t_i$ for all $i \in \bN$ as $t_i = \min(\tau_i,L+1).$ Finally, for all $i \in \bN^{*}$ we can consider the random variable $\xi^i$ on $(\Omega,\cF,\bP)$ with values in $(\Xi, \cX)$ defined by $$\xi^i = (v_{t_{i-1} - 1}, v_{t_{i-1}},\ldots, v_{t_{i} - 1}).$$
We denote by $N$ the smallest integer $n$ such that  $t_{n}=L+1$. We denote by $\ell(\xi^i)$ the length of the simple path $\xi^i.$

\begin{example}
  Given a graph with vertices $V=\{a,b,c,\ldots,z\}$ and a given edge set that is not useful to describe here, consider $\omega \in \Omega$ and the walk
  $ \xi(\omega) = (c,a,e,g,a,f,a,b,h)$ with length $L=8$. 
  Then, $t_0(\omega) = 1$, $t_1(\omega) =4$, $t_2(\omega)=6$, $t_3(\omega)=t_4(\omega)=\ldots=9,$
  and $\xi(\omega)$ can be decomposed into $N(\omega) = 3$ simple paths and we have
  $\xi^1(\omega) =  (c,a,e,g)$, $\xi^2(\omega) =  (g,a,f)$,  $\xi^3(\omega) =  (f,a,b,h)$ and $\xi^4(\omega) = \ldots = \xi^8(\omega) = (h).$
Their respective lengths are $\ell(\xi^1(\omega))=3$, $\ell(\xi^2(\omega))=2$, $\ell(\xi^3(\omega))=3$ and $\ell(\xi^i(\omega))=0$ for all $i = 4,\ldots,8$.
We identify $\xi(\omega)$ with $(\xi^1(\omega),\ldots,\xi^8(\omega)).$
\end{example}

It is worth noting that, by construction, $\xi^i$ is a simple path. Moreover, the following statements hold:
% , equal to $|c_i(s)|-1$. 
\begin{itemize}
\item We have $1 \leq N \leq L$ a.s.
\item These three events are equivalent for all $i$: \{$\xi^i$ is a single vertex\}, \{$\ell(\xi^i) = 0$\} and \{$i \geq N+1$\}
\item The last element of $\xi^N$ is a.s. $v_{L}$
\item $\sum_{i=1}^{L} \ell(\xi^i) = L$ a.s.
\end{itemize}

In the sequel, we identify the random vector $(\xi^1,\ldots,\xi^L)$ 
with the random variable $\xi = (v_0,\ldots,v_L).$ As a result, $\xi$ is seen as a r.v with values in $\Xi^L.$

Our notations are summarized in Table~\ref{tab:notations}.
\begin{table}[t]
  \centering
  \caption{Useful Notations} \label{tab:notations}
  \begin{tabular}[h]{|c|c|}
    \hline
    $G = (V,E)$ & Graph with no self-loop\\
%    $d$ & The unique probability on $V$ proportional to the degree\\
%    $W$ & Kernel of the simple random walk on $G$\\
%    $\mu$ & $\mu = d \otimes \mathrm{W}^{\otimes L}$ {\color{blue} Notation correcte ?}\\
%    \hline
%    $\xi$ & $\xi \sim \mu$ \\
    $s$ & walk on $G$ \\
    $(v_i)$ & infinite random walk \\
    $\xi = (\xi^1,\ldots,\xi^L)$ & random walk of length $L$ \\
    $\xi^i$ & random simple path \\
    $\ell(\xi^i)$ & length of $\xi^i$\\
%     $c_1(s),\dots,c_{N(s)}(s)$ & Decomposition of $s$ into $N(s)$ simple paths\\
%     $x,y$ & Functions on $V\to \bR$. Function $y$ is fixed. \\
%     $x_s$, $x_{\overline s}$ & Restriction of $x$ to $s$ (resp. $V\backslash s$)\\
    $R(x,\phi)$ & $\phi-$regularization of $x$ on  $G$ \\
    $R(x,\phi_s)$ & $\phi-$regularization of $x$ along the walk $s$ \\
%    $\Pi(\,.\,,s,\rho)$ & proximity operator of the $\phi-$regularization on the walk $s$\\
    \hline
  \end{tabular}
\end{table}
%\asnote{Tableau OK?}
  For every $i = 1,\ldots,L$, define the functions $f_i, g_i$ on
  $\bR^V\times \Xi$ in such a way that
  \begin{align}
    f_i(x,\xi^i)&= \frac{\ell(\xi^i)}{L |E|} F(x) \label{eq:fi} \\
    g_i(x,\xi^i)&= \frac{1}{L}\, R(x ,\phi_{\xi^i}) \,. \label{eq:gi} 
  \end{align}
  Note that when $i > N(\omega)$ then  $f_i(x,\xi^i(\omega)) = g_i(x,\xi^i(\omega)) = 0$.

\begin{prop}
\label{prop:pb3}
 For every $x\in \bR^V$, we have
\begin{equation}
\frac{1}{|E|}(F(x) + R(x,\phi)) = \sum_{i=1}^L\bE\left[ f_i(x,\xi^i)+g_i(x,\xi^i)\right]\,.
\label{eq:pb3}
\end{equation}
\end{prop}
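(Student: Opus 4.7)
The plan is to split the sum on the right-hand side into the $f_i$-contribution and the $g_i$-contribution, handle each separately, and reduce the $g_i$-contribution to the already proven Proposition~\ref{prop:tvE}.

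First I would treat the deterministic $F$-part. By definition of $f_i$ and linearity of expectation,
\[
\sum_{i=1}^L \bE[f_i(x,\xi^i)] \;=\; \frac{F(x)}{L\,|E|}\, \bE\!\left[\sum_{i=1}^L \ell(\xi^i)\right].
\]
The bulleted identities just before the proposition state that $\sum_{i=1}^L \ell(\xi^i) = L$ almost surely, so this term equals $F(x)/|E|$. This step is essentially routine: the deterministic prefactor $\ell(\xi^i)/(L|E|)$ was chosen precisely so that the total weight placed on $F$ normalizes to $1/|E|$.

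Next I would handle the regularization part, which is the substantive step. The goal is to show
\[
\sum_{i=1}^L R(x,\phi_{\xi^i}) \;=\; R(x,\phi_\xi)\qquad \text{a.s.}
\]
The argument is combinatorial: by the construction of the stopping times $(t_i)$, the simple path $\xi^i = (v_{t_{i-1}-1}, v_{t_{i-1}}, \ldots, v_{t_i - 1})$ contributes exactly the edges $\{v_{k-1},v_k\}$ for $k = t_{i-1},\ldots, t_i - 1$. For $i \geq N+1$, $\xi^i$ is a single vertex and contributes $0$. Since $t_0 = 1$, $t_N = L+1$, and the $[t_{i-1}, t_i - 1]$ form a partition of $\{1,\ldots,L\}$, the edges summed across $i=1,\ldots,N$ are precisely the edges $\{v_{k-1},v_k\}$ of the full walk $\xi$, each counted once. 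Thus, summing the definitions of $R(x,\phi_{\xi^i})$ reproduces $R(x,\phi_\xi) = \sum_{k=1}^L \phi_{\{v_{k-1},v_k\}}(x(v_{k-1}),x(v_k))$. Taking expectations and using Proposition~\ref{prop:tvE},
\[
\sum_{i=1}^L \bE[g_i(x,\xi^i)] \;=\; \frac{1}{L}\bE[R(x,\phi_\xi)] \;=\; \frac{R(x,\phi)}{|E|}.
\]

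Finally I would add the two contributions to obtain the claim. The only delicate point is the partition argument above, and in particular the boundary case $N < L$ where the ``trailing'' $\xi^i$ are trivial one-vertex walks; this is handled cleanly by the convention $R(x,\phi_s) = 0$ when $s$ is a single vertex, which also ensures that $g_i(x,\xi^i) = 0$ and $\ell(\xi^i) = 0$ coincide on $\{i > N\}$, so no edge is double-counted and none is missed.
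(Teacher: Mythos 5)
Your proof is correct and follows essentially the same route as the paper: the paper likewise observes pathwise that $R(x,\phi_{\xi(\omega)}) = \sum_{i=1}^{N(\omega)} R(x,\phi_{\xi^i(\omega)}) = L\sum_{i=1}^L g_i(x,\xi^i(\omega))$, integrates and invokes Prop.~\ref{prop:tvE}, and uses $\sum_i f_i(x,\xi^i(\omega)) = F(x)/|E|$ (equivalently $\sum_i \ell(\xi^i)=L$ a.s.). Your version merely spells out the partition of $\{1,\dots,L\}$ by the stopping times, which the paper leaves implicit.
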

\begin{proof}
For every $\omega \in \Omega$ and every $x\in \bR^V$,
\begin{equation*}
\frac{1}{L} R(x,\phi_{\xi(\omega)}) = \frac{1}{L} \sum_{i=1}^{N(\omega)}R (x,\phi_{\xi^i(\omega)}) = \sum_{i=1}^L g_i(x,\xi^i(\omega))\,.
\end{equation*}
Integrating, and using
Prop.~\ref{prop:tvE}, it follows that
$\sum_{i=1}^L \bE[g_i(x,\xi^i)] = \frac{1}{|E|} R(x,\phi)$.
Moreover, $\sum_{i=1}^L f_i(x,\xi^i(\omega)) = \frac{1}{|E|} F(x)$.
This completes the proof.
\end{proof}

\subsection{Main Algorithm}

Prop.~\ref{prop:pb3} suggests that minimizers of Problem~(\ref{eq:prox})
can be found by minimizing the right-hand side of~(\ref{eq:pb3}).
This can be achieved by means of the stochastic approximation algorithm
provided in Sec.~\ref{sec:as}.
The corresponding iterations~(\ref{eq:spa}) read as $x_{n+1}=\sT_{\gamma_{n+1}}(x_n,\xi_{n+1})$
where $(\xi_n)$ are iid copies of $\xi$.
For every $i=1,\ldots,L-1$, the intermediate variable $\bar x_{n+1}^i$ given by
Eq.~(\ref{eq:xintermediate}) satisfies
\begin{align*}
  \bar x_{n+1}^i &= \prox_{\gamma_n g_i(\,.\,,\xi_{n+1}^i)}(\bar x_n^{i-1}-\gamma_n \nabla f_i(\bar x_n^{i-1},\xi_{n+1}^i))\,.
\end{align*}
\begin{theorem}
Let Ass.~\ref{step} hold true. Assume that the convex function $F$ is 
differentiable and that $\nabla F$ is Lipschitz continuous. 
Assume that Problem~\eqref{eq:prox} admits a minimizer. Then, there exists a 
r.v.~$X_\star$ s.t. $X_\star(\omega)$ is a minimizer of~\eqref{eq:prox} 
for all $\omega$ $\bP$-a.e.,  and s.t. the sequence $(x_n)$ defined above converges a.s. to $X_\star$ as $n\to \infty$.
Moreover, for every $i=0,\ldots,L-1$, $\bar x_n^i$ converges a.s. to $X_\star$.
\end{theorem}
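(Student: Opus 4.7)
The plan is to apply Theorem~\ref{th:main} to the problem obtained through Prop.~\ref{prop:pb3}. Precisely, taking $\sX = \bR^V$, the random walk $\xi = (\xi^1,\ldots,\xi^L)$ constructed in Sec.~\ref{sec:algo}, and the functions $f_i, g_i$ defined in (\ref{eq:fi})--(\ref{eq:gi}), Prop.~\ref{prop:pb3} identifies Problem~(\ref{eq:prox}) with an instance of Problem~(\ref{eq:general}), up to the positive multiplicative constant $1/|E|$ that does not affect the minimizer set $\mZ$. The hypotheses of the present theorem already include Ass.~\ref{step}, so the task reduces to verifying Ass.~\ref{fg}, \ref{f-Lip}, \ref{selec} and \ref{bnd-mom} for this specific instance; Theorem~\ref{th:main} will then yield the announced a.s.\ convergence of $(x_n)$ and of each $(\bar x_n^i)$ to a random variable $X_\star$ that is $\bP$-a.s.\ a minimizer of~(\ref{eq:prox}).

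The observation that trivializes most of the verification is that the space $\Xi$ of walks of length at most $L$ on the finite graph $G$ is itself a finite set, equipped with its discrete $\sigma$-algebra. From this, Ass.~\ref{fg} follows by inspection: convexity and differentiability in $x$ of $f_i(\cdot,s)$ and $g_i(\cdot,s)$ are inherited from those of $F$ and of the $\phi_e$; joint $\mcB(\sX)\otimes\cX$-measurability is automatic since each function is continuous in $x$ and $\Xi$ is discrete; and integrability reduces to the finiteness of real numbers. Similarly, from $\nabla f_i(x,s) = \frac{\ell(s)}{L |E|}\nabla F(x)$ together with the Lipschitz hypothesis on $\nabla F$ (with constant, say, $L_F$), the map $K_i(s) = \ell(s) L_F /(L|E|) \le L_F/|E|$ is bounded on $\Xi$, which secures Ass.~\ref{f-Lip}. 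Ass.~\ref{bnd-mom} is equally direct: each $g_i(\cdot,s)$ is a finite-valued convex function on $\bR^V$ whose subdifferential is therefore bounded on compacts, and taking the maximum over the finite set $\Xi$ delivers a uniform bound, hence finite moments of every order uniformly on any compact $\cK$.

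The only genuinely non-routine verification is the representation part of Ass.~\ref{selec}. The plan there is to apply the Fermat rule to the equivalent objective~(\ref{eq:pb3}) at a minimizer $x_\star$, then invoke the Rockafellar--Wets interchange of subdifferential and expectation \cite{roc-wet-82}. This interchange is licensed because the $g_i(\cdot,s)$ are finite-valued convex normal integrands over the finite space $\Xi$; it furnishes measurable selections $\varphi_i:\Xi\to\bR^V$ with $\varphi_i(\xi^i)\in\partial g_i(x_\star,\xi^i)$ a.s.\ and $\sum_{i=1}^L\bE[\nabla f_i(x_\star,\xi^i)+\varphi_i(\xi^i)] = 0$. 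Because $\Xi$ is finite, each $\varphi_i$ takes finitely many values and is therefore bounded, so the representation is $(2+\varepsilon)$-integrable for every $\varepsilon>0$, and similarly for the gradient terms. This Fermat-plus-interchange step is the one I expect to warrant the most care in a full write-up; once it is granted, applying Theorem~\ref{th:main} concludes the proof.
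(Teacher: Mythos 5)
Your proposal is correct and follows the same route as the paper: reduce to Th.~\ref{th:main} via Prop.~\ref{prop:pb3}, check Ass.~\ref{fg} and~\ref{f-Lip} directly from the definitions (\ref{eq:fi})--(\ref{eq:gi}) and the Lipschitz continuity of $\nabla F$, and obtain Ass.~\ref{selec} and~\ref{bnd-mom} from the boundedness of $\nabla f_i(x_\star,\xi^i)$ and $\varphi_i(\xi^i)$ over the finite walk space $\Xi$. The paper's own proof is just a terser version of the same verification, relying on the interchange of subdifferential and expectation already set up in Sec.~\ref{sec:as} to produce the representation whose $(2+\varepsilon)$-integrability you establish.
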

\begin{proof}
It is sufficient to verify that the mappings $f_i$, $g_i$ defined by (\ref{eq:fi}) and (\ref{eq:gi}) respectively
fulfill Ass.~\ref{fg}--\ref{bnd-mom} of Th.~\ref{th:main}. Then, Th.~\ref{th:main} gives the conclusion. Ass.~\ref{fg} and \ref{f-Lip} are trivially satisfied.
It remains to show, for every minimizer $x_\star$, the existence of a $(2+\varepsilon)$-representation, for some $\varepsilon>0$.
Any such $x_\star$ satisfies Eq. (\ref{eq:optimality-2}) where $\varphi_i$ satisfies~(\ref{eq:representation}).
By definition of $f_i$ and $g_i$, it is straightforward to show that there exists a deterministic constant $C_\star$ depending only 
on $x_\star$ and the graph $G$, such that $\|\nabla f_i(x_\star,\xi^i)\|<C_\star$ and $\|\varphi_i(\xi^i)\|<C_\star$.
This proves Ass.~\ref{selec}. Ass.~\ref{bnd-mom} can be easily checked by the same arguments.
\end{proof}

\begin{table}
\caption{Proposed Snake algorithm.}\label{tab:snake}
\begin{algorithmic}[t]
\Procedure{Snake}{$x_0, L$} 
\State $z\gets x_{0}$
\State $e\gets ${\sc Rnd\_oriented\_edge} %Decrire Rnd\_oriented\_edge notamment la question de l'orientation
\State $n\gets 0$
\State $\ell\gets L$
\While {stopping criterion is not met}
\State $c, e \gets $ {\sc{Simple\_path}}{$(e,\ell)$}
\State $z \gets $ {\sc{Prox1D}}($z - \gamma_n \frac{\text{\sc Length}(c)}{L|E|}\nabla F(z),c, \frac{1}{L}\gamma_n$)
\State $\ell\gets \ell-\text{\sc Length}(c)$
\If{$\ell = 0$}
\State $e \gets$ {\sc Rnd\_oriented\_edge}
\State $\ell \gets L$
\State $n \gets n+1$ \Comment{$x_n$is $z$ at this step}
\EndIf
\EndWhile
\State \textbf{return} $z$
\EndProcedure
\end{algorithmic}
\end{table}

\begin{table}
\caption{{\sc{Simple\_path}} procedure.}
\label{tab:path}
\begin{algorithmic}[t]
\Procedure{Simple\_path}{$e,\ell$} %Decrire L[-1]
\State $c\gets e$
\State $w \gets $ {\sc Uniform\_Neib}$(e[-1])$
\While {$w \notin c$ and {\sc Length}(c)$<\ell$}
\State $c\gets [c,w]$
\State $w \gets $ {\sc Uniform\_Neib}$(w)$
\EndWhile
\State \textbf{return} $c, [c[-1],w]$
\EndProcedure
\end{algorithmic}
\end{table}

Consider the general $\phi$-regularized problem~\eqref{eq:prox}, and assume that an efficient procedure to compute the proximity operator of the $\phi$-regularization over an 1D-graph is available. The sequence $(x_n)$ is generated by the algorithm 
{\sc Snake} (applied with the latter 1D efficient procedure) and is summarized in Table~\ref{tab:snake}. Recall the definition of the probability $\pi$ on $V$ and the transition kernel $P$ on $V^2.$
The procedure presented in this table calls the following subroutines. 
\begin{itemize}
\item If $c$ is a finite walk, $c[-1]$ is the last element of $c$ and {\sc Length}$(c)$ is its length as a walk that is $|c|-1.$ 
\item The procedure {\sc Rnd\_Oriented\_Edge} returns a tuple of two nodes randomly chosen $(v,w)$ where $v \sim \pi$ and $w \sim P(v,.).$ 
%It is the same than choosing uniformly an edge $\{v,w\}$ in $E$ and orienting it randomly.
\item For every $x\in \bR^V$, every simple path $s$ and every
  $\alpha>0$, {\sc Prox1D}$(x,s,\alpha)$ is any procedure that returns the
  quantity 
$
\prox_{\alpha R(.,\phi_s)}(x)\,.
$
\item The procedure {\sc Uniform\_Neib}$(v)$ returns a random
vertex drawn uniformly amongst the neighbors of the vertex $v$ that is with distribution $P(v,.)$.
\item The procedure {\sc Simple\_path}$(e,\ell)$, described
in Table~\ref{tab:path}, generates the first steps of a
random walk on $G$ with transition kernel $P$ 
initialized at the vertex $e[-1]$, and prefaced by the first node in $e$. 
It represents the $\xi^i$'s of the previous section. The random walk is stopped
when one node is repeated, or until the maximum number of samples
$\ell+1$ is reached. The procedure produces two outputs, the walk and the oriented edge $c, (c[-1],w)$. 
In the case where the procedure stopped due to a repeated node,
$c$ represents the simple path obtained by stopping the walk before
the first repetition occurs, while $w$ is the vertex which has been
repeated (referred to as the pivot node). In the case where no vertex
is repeated, it means that the procedure stopped because the maximum
length was achieved. In that case, $c$ represents the last simple
path generated, and the algorithm doesn't use the pivot node $w$.
\end{itemize}

\begin{remark}
  Although Snake converges for every value of the hyperparameter $L$, a natural question is about
the influence of $L$ on the behavior of the algorithm. In the case 
where $R(\,\cdot\,,\phi)$ is the TV regularization, \cite{condat2013direct} notes that, empirically,
the taut-string algorithm used to compute the proximity operator has a complexity of order $O(L)$.
The same holds for the Laplacian regularization. Hence, parameter $L$ controls the complexity of every iteration.
On the other hand, in the reformulation of Problem~(\ref{eq:prox}) into the stochastic form~(\ref{eq:tvE}),
the random variable $|E| R(x,\phi_\xi)/L$ is an unbiased estimate of $R(x,\phi)$. By the ergodic theorem, the larger $L$, 
the more accurate is the approximation. Hence, there is a
trade-off between complexity of an iteration and precision of the
algorithm.
This trade-off is standard in the machine learning literature. It
often appears while sampling mini-batches in order to apply the
stochastic gradient algorithm to a deterministic optimization
problem(see~\cite{bottou2010large, bottou2016optimization}). The choice of $L$ is somehow similar to
the problem of the choice of the length of the mini-batches in this context.

Providing a theoretical rule that would optimally select the value of $L$ is a difficult task
that is beyond the scope of this paper. Nevertheless, in Sec.~\ref{sec:examples}, 
we provide a detailed analysis of the influence of $L$ on the numerical performance of the algorithm.
\end{remark}

\section{Proximity operator over 1D-graphs}
\label{sec:1D}

We now provide some special cases of $\phi$-regularizations, for which the computation of the proximity operator over 1D-graphs is easily tractable. 
Specifically, we address the case of the total variation regularization and the Laplacian regularization which are particular cases of $\phi$-regularizations.

\subsection{Total Variation norm}

In the case where $\phi_{\{i,j\}}(x,x') = w_{\{i,j\}}|x - x'|,$ $R(x,\phi)$ reduces
to the weighted TV regularization $$ R(x,\phi) =
\sum_{\{i,j\}\in E} w_{\{i,j\}} |x(i)-x(j)| \,$$ and in the case where
$\phi_{\{i,j\}}(x,x') = |x - x'|,$ $R(x,\phi)$ reduces to the its
unweighted version $$ R(x,\phi) = \sum_{\{i,j\}\in E} |x(i)-x(j)|\,.$$

As mentioned above, there exists a fast method, the taut string algorithm, to compute the proximity operator of these $\phi-$regularizations over a 1D-graph (\cite{barberoTV14,condat2013direct}). 
% By rewriting the dual problem of the computation of the proximity operator of the total variation norm, it turns out that one can find the solution by using the algorithm of \cite{davies2001local} which is efficient (see \cite{barberoTV14} for a survey). 

\subsection{Laplacian regularization}

In the case where $\phi_{\{i,j\}}(x, x') = w_{\{i,j\}}(x - x')^2,$ $R(x,\phi)$ reduces to the Laplacian regularization that is $$ R(x,\phi) = \sum_{\{i,j\}\in E} w_{\{i,j\}} (x(i)-x(j))^2.$$ Its unweighted version is $$ \sum_{\{i,j\}\in E} (x(i)-x(j))^2 = \|\nabla x\|^2 = x^{*} \mathcal{L} x. $$

In the case where $\phi_{\{i,j\}}(x, x') = w_{\{i,j\}}(x/\sqrt{\deg(i)} - x'/\sqrt{\deg(j)})^2,$ $$R(x,\phi) = \sum_{\{i,j\}\in E} w_{\{i,j\}} \left (\frac{x(i)}{\sqrt{\deg(i)}} - \frac{x'(i)}{\sqrt{\deg(j)}} \right )^2$$ is the normalized Laplacian regularization.

We now explain one method to compute the proximity operator of the unweighted Laplacian regularization over an 1D-graph. The computation of the proximity operator of the normalized Laplacian regularization can be done similarly. The computation of the proximity operator of the weighted Laplacian regularization over an 1D-graph is as fast as the computation the proximity operator of the unweighted Laplacian regularization over an 1D-graph, using for example Thomas' algorithm.

The proximity operator of a fixed point $y\in \bR^{\ell+1}$ is obtained as a solution to a quadratic programming problem of the form:
$$\min_{x \in \bR^{\ell+1}} \frac12 \|x - y\|^2 + \lambda\sum_{k = 1}^{\ell} (x(k-1) - x(k))^2\,,$$
where $\lambda > 0$ is a scaling parameter.
% Writing the first order conditions leads to solving a linear system involving $\mathcal{L}.$ This can be done efficiently since there exists a fast method to solve this system. It relies on the explicit diagonalization of $\mathcal{L}$ and the efficient computation of the change of basis which is a cosine transform (see \cite{graham1997spectral}). 
Writing the first order conditions, the solution $x$ satisfies
\begin{equation} 
(I +  2\lambda \mathcal{L}) x = y
\label{eq:systemelin} 
\end{equation} 
where $\mathcal{L}$ is the Laplacian matrix of the 1D-graph with $\ell+1$ nodes and $I$ is the identity matrix in $\bR^{\ell+1}$. 
By \cite{graham1997spectral}, $\mathcal{L}$ can be diagonalized explicitely. In particular, $I +  2\lambda \mathcal{L}$ has eigenvalues $$1 + 4\lambda \left(1 - \cos \left( \frac{\pi k}{\ell+1} \right) \right),$$ and eigenvectors $e_k \in \bR^{\ell+1}$ $$e_{k}(j) = \frac{1}{2(\ell+1)}\cos \left (\pi \frac{kj}{\ell+1} - \pi\frac{k}{2(\ell+1)} \right),$$ for $0 \leq k < n$.
Hence, $x = C^{*} \Lambda^{-1} C y$, where $\Lambda$ gathers the eigenvalues of $I +  2\lambda \mathcal{L}$ and the operators  $C$ and $C^{*}$ are the discrete cosine transform operator and the inverse discrete cosine transform.
The practical computation of $x$ can be found in $O(\ell\log(\ell))$ operations.

\section{Examples}
\label{sec:examples}

We now give some practical instances of Problem~\eqref{eq:prox} by particularizing $F$ and the $\phi$-regularization in~\eqref{eq:prox}. We also provide some simulations to compare our method to existing algorithms.  

\subsection{Trend Filtering on Graphs}

Consider a vector $y \in \bR^V$. The Graph Trend Filtering (GTF) estimate on 
$V$ with parameter $k$ set to one is defined in \cite{wang2014trend} by 
\begin{equation}
\hat{y} = \arg\min_{x\in \bR^V} \frac 12\|x-y\|^2 + \lambda \sum_{\{i,j\} \in E} |x(i) - x(j)|.
\label{eq:proxtv}
\end{equation}
where $\lambda > 0$ is a scaling parameter.  
In the GTF context, the vector $y$ represents a sample of noisy data over the graph $G$ and the GTF estimate represents a denoised version of $y$. When $G$ is an 1D or a 2D-graph, the GTF boils down to a well known context~\cite{tibshirani2014adaptive, chambolle2010introduction}. When $G$ is a general graph, the GTF estimate is studied in \cite{wang2014trend} and \cite{hutter2016optimal}. 
The estimate $\hat y$ is obtained as the solution of a TV-regularized risk minimization with $F(x) = \frac12 \|x-y\|^2$ where $y$ is fixed. We address the problem of computing the GTF estimate on two real life graphs from \cite{snapnets} and one sampled graph. 
The first one is the Facebook graph which is a network of 4039 nodes and 88234 edges extracted from the Facebook social network. The second one is the Orkut graph with 3072441 nodes and 117185083 edges. Orkut was also an on-line social network. \as{The third graph is sampled according to a Stochastic Block Model (SBM). Namely we generate a graph of 4000 nodes with four well-separated clusters of 1000 nodes (also called ``communities'') as depicted in Fig.~\eqref{fig:data}. Then we
draw independently $N^2$ Bernoulli r.v. $E(i,j)$, encoding the edges of the
graph (an edge between nodes $i$ and $j$ is present iff $E(i,j)=1$), such that
$\mathbb{P}\{E(i,j) = 1\} = P(c_i,c_j)$ where $c_i$ denotes the community of the node $i$ and where

\[
\begin{cases}
P(c,c') =& .1\text{ if } c=c'\\
P(c,c') =& .005\text{ otherwise}
\end{cases}
\]
This model is called the stochastic block model for the matrix
$P$~\cite{holland1983stochastic}. It amounts to a blockwise Erd\"os-R\'enyi
model with parameters depending only on the blocks. It leads to 81117 edges.

We assume that every node is provided with an unknown value in $\bR$ 
(the set of all these values being referred to as the \emph{signal} in the sequel).
In our example, the value $y(i)$ at node $i$ is generated as $y(i) = l(c_i) + \sigma \epsilon_i$ where $l$ is a 
mapping from the communities to a set of levels (in Fig.~\ref{fig:data}, $l(i)$ is an integer in $[0,255]$), 
and $\epsilon$ denotes a standard Gaussian white noise with $\sigma>0$ as its standard deviation. In Figure~\ref{fig:data} we
represent an example of the signal $y$ (left figure) along with the ``initial'' values $l(c_i)$ represented in grayscale at every node.

% Figure~\ref{fig:data} illustrates this context when the
%graph $G$ is generated according to a stochastic block model {\color{red} PB: Une reference ici}. 
%We define the data term $y$ as $y(v) = l(c_v) + \sigma(c_v) \epsilon_v$ where
%$l(c)$ is a mapping from the communities to the levels (in our experiments
%$0\leq l\leq 255$), and $\epsilon_v$ denotes a standard Gaussian white noise
%with $\sigma(c)>0$ as its standard deviation. In Figure~\ref{fig:data} we
%represent an example of the signal $y$ along with the sought 
%``signal'' $l(c_v)$.
%Data is
%the grayscale of the nodes {\color{red} PB: que signifie the grayscale of nodes ? Il faut etre plus %precis.}. Original data $y$ is plotted on the nodes
%of the graph $G$ (left). The GTF estimate $\hat{y}$ obtained from $y$ is plotted on the right.

\begin{figure}[ht!]
\[
  \begin{array}{cc}
  \includegraphics[width=.5\linewidth]{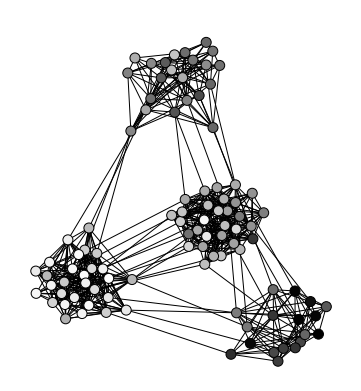} &
  \includegraphics[width=.5\linewidth]{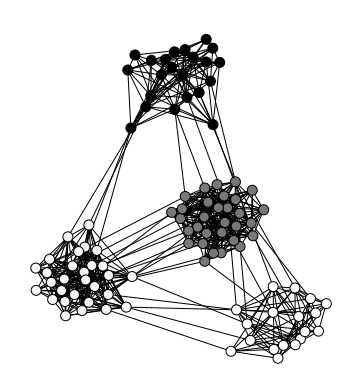}
  \end{array}
\]
\caption{The signal is the grayscale of the node The graph is sampled according to a SBM. Left: Noised signal over the nodes. Right: Sought signal.}
\label{fig:data}
\end{figure}

}

Over the two real life graphs, the vector $y$ is sampled according to a standard Gaussian distribution of dimension $|V|$. The parameter $\lambda$ is set such that $\bE[\frac12 \|x-y\|^2] = \bE[\lambda \sum_{\{i,j\} \in E} |x(i) - x(j)|]$ if $x,y$ are two independent r.v with standardized Gaussian distribution. The initial guess $x_0$ is set equal to $y$. The step size $\gamma_n$ set equal to $|V|/(10 n)$ for the two real life graphs and $|V|/(5 n)$ for the SBM realization graph. We ran the Snake algorithm for different values of $L$, except over the Orkut graph where $L = |V|$.

The dual problem of~\eqref{eq:proxtv} is quadratic with a box
constraint. The Snake algorithm is compared to the well-known
projected gradient (PG) algorithm for the dual problem. To solve the dual problem of~\eqref{eq:proxtv}, we use L-BFGS-B~\cite{l-bfgs-b(95)} as suggested in~\cite{wang2014trend}. 
Note that, while running on the
Orkut graph, the algorithm L-BFGS-B leads to a memory error from the solver~\cite{l-bfgs-b(95)} in SciPy (using one thread of a 2800 MHz CPU and 256GB RAM). 

Figures~\ref{fig:Courbe},~\ref{fig:Courbe1} and~\ref{fig:Courbe2} show the objective function as a function of time for each algorithm.

\begin{figure}[ht!]
\includegraphics[width=\linewidth]{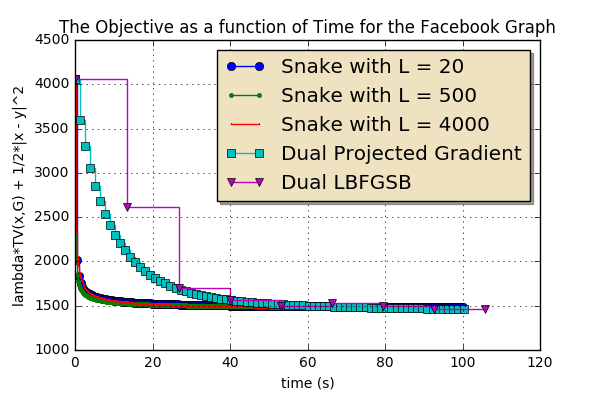}
\caption{Snake applied to the TV regularization over the Facebook Graph}
\label{fig:Courbe}
\end{figure}

\begin{figure}[ht!]
\includegraphics[width=\linewidth]{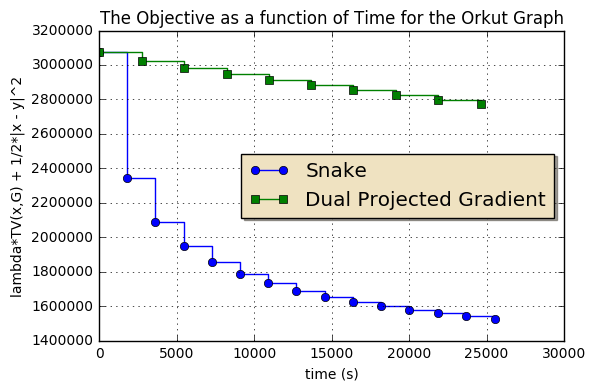}
\caption{Snake applied to the TV regularization over the Orkut Graph}
\label{fig:Courbe1}
\end{figure}

\begin{figure}[ht!]
\includegraphics[width=\linewidth]{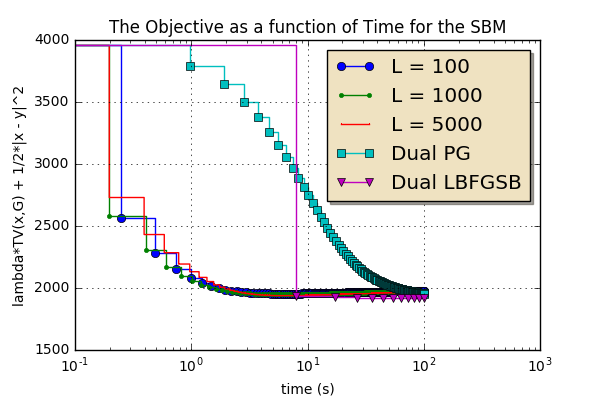}
\caption{Snake applied to the TV regularization over the SBM realization graph, in log scale}
\label{fig:Courbe2}
\end{figure}

In the case of the TV regularization, we observe that Snake takes advantage of being an online method, which is known to be twofold (~\cite{bottou2010large, bottou2016optimization}). First, the iteration complexity is controlled even over large general graphs: the complexity of the computation of the proximity operator is empirically linear~\cite{condat2013direct}. On the contrary, the projected gradient algorithm involves a matrix-vector product with complexity $O(|E|)$. Hence, \textit{e.g} the projected gradient algorithm has an iteration complexity of at least $O(|E|)$. The iteration complexity of Snake can be set to be moderate in order to frequently get iterates while running the algorithm. Then, Snake is faster than L-BFGS-B and the projected gradient algorithms for the dual problem in the first iterations of the algorithms. 

Moreover, for the TV regularization, Snake seems perform globally better than L-BFGS-B and the projected gradient. This is because Snake is a proximal method where the proximity operator is efficiently computed (\cite{bau-com-livre11}).

The parameter $L$ seems to have a minor influence on the performance of the algorithm since, in Figure~\ref{fig:Courbe} the curves corresponding to different values of $L$ are closely superposed. The log scale in Figure~\ref{fig:Courbe2} allows us to see that the curve corresponding Snake with $L = 1000$ performs slightly better that the others. Figure~\ref{fig:Courbe3} shows supplementary curves in log scale where Snake is run over the Facebook graph with different values of $L$.

\begin{figure}[ht!]
\includegraphics[width=\linewidth]{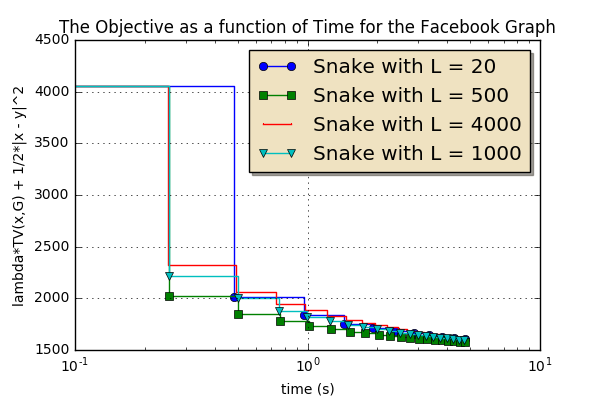}
\caption{Snake applied to the TV regularization over the Facebook graph, in log scale}
\label{fig:Courbe3}
\end{figure}

In Figure~\ref{fig:Courbe3}, the best performing value is $L = 500$.

Over the three graphs, the value $L = O(|V|)$ is a good value, if not the best value to use the Snake algorithm. One can show that, while sampling the first steps of the infinite random walk over $G$ from the node, say $v$, the expected time of return to the random node $v$ is $|V|$. Hence, the value $L = |V|$ allow Snake to significantly explore the graph during one iteration. 

\subsection{Graph Inpainting}

The problem of graph inpainting has been studied in
\cite{chen2014signal,el2016asymptotic,zhu2003semi} and can be
expressed as follows. Consider a vector $y \in \bR^V$, a subset $O
\subset V$. Let $\bar O$ be its complementary in $V$. The harmonic energy
minimization problem is defined in~\cite{zhu2003semi} by
\begin{equation*}
\begin{aligned}
& \underset{x \in \bR^{V}}{\text{min}}
& & \sum_{\{i,j\} \in E} (x(i) - x(j))^2 \\
& \text{subject to}
& & x(i) = y(i), \forall i \in O.
\end{aligned}
\end{equation*}
This problem is interpreted as follows. The signal $y \in \bR^V$ is partially observed over the nodes and the aim is to recover $y$ over the non observed nodes. The subset $O \subset V$ is the set of the observed nodes and $\bar O$ the set of unobserved nodes. An example is shown in Figure~\ref{fig:inpainting}.
\begin{figure}[ht!]
\[
  \begin{array}{cc}
  \includegraphics[width=.5\linewidth]{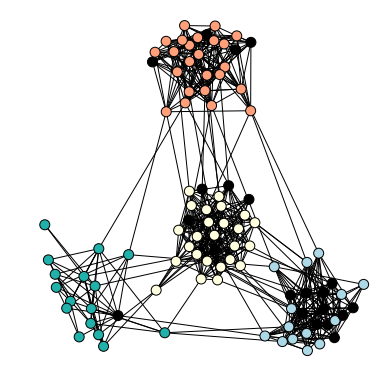} &
  \includegraphics[width=.5\linewidth]{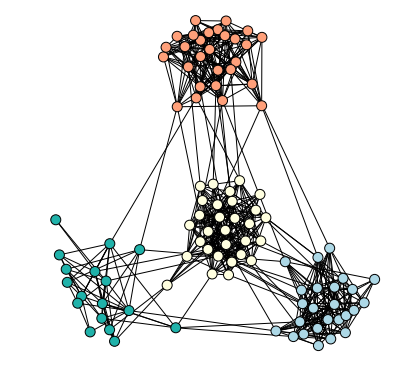}
  \end{array}
\]
\caption{Left: Partially observed data (unobserved nodes are black, data is the color of nodes). Right: Fully observed data (the color is observed for all nodes).}
\label{fig:inpainting}
\end{figure}

Denote by $G_{\bar O} = (\bar O, E_{\bar O})$ the subgraph of $G$
induced by $\bar O$. Namely, $\bar O$ is the set of vertices, and the
set $E_{\bar O}$ is formed by the edges $\{i,j\}\in E$ s.t. $i\in \bar
O$ and $j\in \bar O$.  The harmonic energy minimization is equivalent
to the following Laplacian regularized problem over the graph $G_{\bar O}$:
\begin{equation}
\label{eq:harmonic-energy-minimization}
\begin{aligned}
& \underset{x \in \bR^{\bar O}}{\text{min}}
& & F(x)  + \sum_{\{i,j\}\in E_{\bar O} \atop i < j} (x(i) - x(j))^2 
\end{aligned}
\end{equation}
where 
$$
F(x) = \sum_{i \in \bar O, j \in O \atop \{i,j\} \in E} (x(i) - y(j))^2\,.
$$
The signal $y$ is sampled according to a standardized Gaussian distribution of dimension $|V|$. 
We compared the Snake algorithm to existing algorithm over the Orkut graph. The set $V$ is divided in two parts of equal size to define $O$ and $\bar O$. The initial guess is set equal to zero over the set of unobserved nodes $\bar O$, and to the restriction of $y$ to $O$ over the set of observed nodes $O$. 
We compare our algorithm with the conjugate gradient

% \footnote{\label{f:SP}Spielman-Teng type algorithms \cite{spielman2014nearly}
% could as well be considered as alternatives to the conjugate gradient method. Note that such algorithms require a 
% preconditionning phase which may be demanding over very large graphs.}.

Figures~\ref{fig:laplace} and \ref{fig:laplace2} represent the
objective function $\sum_{\{i,j\} \in E} (x(i) - x(j))^2$ as a
function of time. Over the Facebook graph, the parameter $L$ is set equal to
$|V|/10$. The step size $\gamma_n$ are set equal to $|V|/(10 n)$. Over the
Orkut graph, $L$ is set equal to $|V|/50$. The step size are set equal
to $|V|/(5 \sqrt{n})$ on the range displayed in Figure~\ref{fig:laplace}. Even if the sequence $(|V|/(5 \sqrt{n}))_{n \in \bN}$ does not satisfies the Ass.~\ref{step}, it is a standard trick in stochastic approximation to take a slowly decreasing step size in the first iterations of the algorithm (\cite{moulines2011non}). It allows the iterates to be quickly close to the set of solutions without converging to the set of solutions. Then, one can continue the iterations using a step size satisfying Ass.~\ref{step} to make the algorithm converging. There is a trade-off between speed and precision while choosing the step-size.
% Two parametrization are used for the Snake algorithm. In the first one $L$ is set equal to $\frac{|V|}{15}$ and in the second one L is set equal to $\frac{|V|}{50}$. 
\begin{figure}[ht!]
\includegraphics[width=\linewidth]{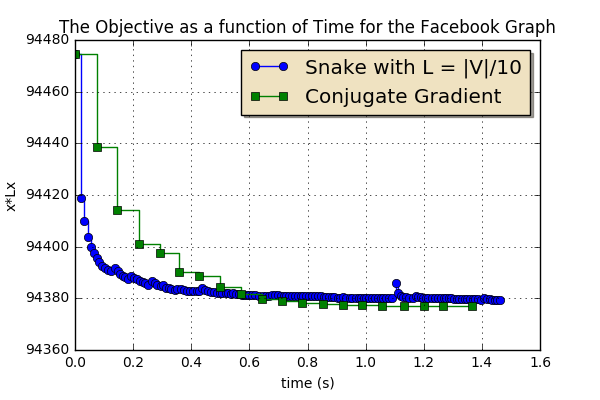}
\caption{Snake applied to the Laplacian regularization over the Facebook Graph}
\label{fig:laplace}
\end{figure}
\begin{figure}[ht!]
\includegraphics[width=\linewidth]{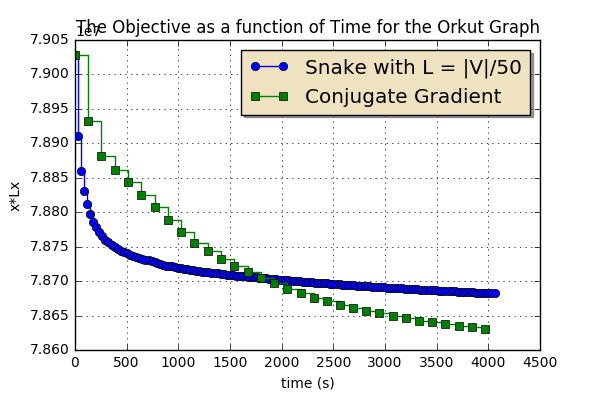}
\caption{Snake applied to the Laplacian regularization over the Orkut Graph}
\label{fig:laplace2}
\end{figure}
% For the harmonic energy minimization, our algorithm starts to
% regularize the graph before the conjugate gradient method (or its
% preconditioned version). Hence, the 
Snake turns out to be faster in the
first iterations. Moreover, as an online method, it
allows the user to control the iteration complexity of the
algorithm. Since a discrete cosine transform is used, the complexity of the computation of the proximity operator is $O(L \log(L))$. In contrast, the iteration complexity of the conjugate
gradient algorithm can be a bottleneck (at least $O(|E|)$) as far as very large graphs are
concerned.

Besides, Snake for the Laplacian regularization does not perform globally better than the conjugate gradient. This is because the conjugate gradient is designed to fully take advantage of the quadratic structure. On the contrary, Snake is not specific to quadratic problems.  

% Finally, it seems that Snake perform better applied to the TV regularization (see Figure~\ref{fig:Courbe},~\ref{fig:Courbe1}) than applied to the Laplacian regularization. It is an expected behavior of the proximal method Snake. We take more advantage applying a proximal method to a non smooth problem like~\eqref{eq:proxtv} than applying a proximal method to a quadratic problem like~\eqref{eq:harmonic-energy-minimization} (see~\cite{bau-com-livre11}).

\subsection{Online Laplacian solver}

Let $\cL$ the Laplacian of a graph $G = (V,E)$.  The resolution of the
equation $\cL x = b$, where $b$ is a zero mean vector, has numerous
applications (\cite{vishnoi2012laplacian,spielman2010algorithms}). It
can be found by minimizing
the Laplacian regularized problem $$\min_{x \in \bR^V} -b^{*}x +
\frac12 x^{*} \cL x.$$ In our experiment, the vector $b$ is sampled
according to a standardized Gaussian distribution of dimension $|V|$.
We compare our algorithm with the conjugate gradient over the Orkut
graph.%~\footnoteref{f:SP}.

\begin{figure}[ht!]
\includegraphics[width=\linewidth]{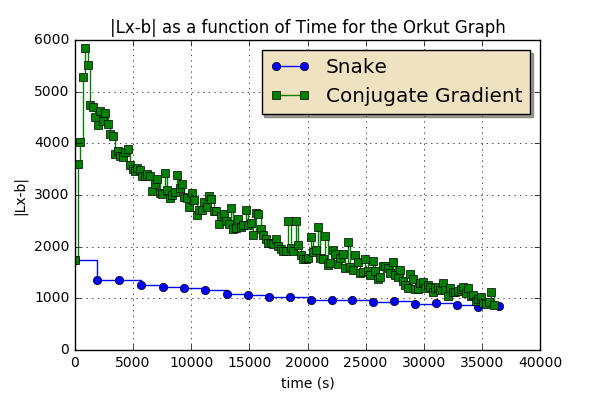}
\caption{Snake applied to the resolution of a Laplacian system over the Orkut graph}
\label{fig:Lxb}
\end{figure}

Figure~\ref{fig:Lxb} represents the quantity $\|\mathcal L x_n - b\|$
as a function of time, where $x_n$ is the iterate provided either by
Snake or by the conjugate gradient method. The parameter $L$ is set
equal to $|V|$. The step size $\gamma_n$ are set equal to $|V|/(2 n)$.
Snake appears to be more stable than the conjugate gradient method, 
has a better performance at start up.

\section{Proof of Th.~\ref{th:main}}
\label{sec:prf-main}

We start with some notations. We endow the probability space 
$(\Omega, \mcF, \bP)$ with the filtration $(\mcF_n)$ defined as 
$\mcF_n = \sigma( \xi_1, \ldots, \xi_n)$, and we write 
$\bE_n = \bE[\cdot\, | \, \mcF_n]$. In particular, $\bE_0 = \bE$.  We also
define $G_i(x) = \bE[g_i(x,\xi^i)]$ and $F_i(x) = \bE[f_i(x,\xi^i)]$ for every
$x\in \sX$.  We denote by $\mu_i$ and $\mu$ the probability laws of $\xi^i$ and
$\xi$ respectively. Finally, $C$ and $\eta$ will refer to positive constants 
whose values can change from an equation to another. The constant $\eta$ can 
be chosen arbitrarily small.

In \cite{bia-hac-16}, the case $L=1$ is studied (algorithm~\eqref{eq:sfb}).
Here we shall reproduce the main steps of the approach of \cite{bia-hac-16},
only treating in detail the specificities of the case $L \geq 1$.  We also note
that in \cite{bia-hac-16}, we considered the so-called maximal monotone
operators, which generalize the subdifferentials of convex functions.
This formalism is not needed here.

The principle of the proof is the following. Given $a \in \sX$, consider the
so called differential inclusion (DI) defined on the set of absolutely 
continuous functions from $\bR_+ = [0,\infty)$ to $\sX$ as follows: 
% \[ 
\begin{equation} 
\label{eq:di-intro}
\left\{\begin{array}[h]{lcl}
\dot \sz(t) &\in& 
    - \sum_{i=1}^L (\nabla F_i(\sz(t)) + \partial G_i(\sz(t)))\, \\ 
 \sz(0) &=& a \, . 
\end{array}\right. 
\end{equation}
% \] 
It is well known that this DI has a unique solution, \emph{i.e.}, a unique
absolutely continuous mapping $\sz : \bR_+ \to \sX$ such that $\sz(0) = a$, and
$\dot \sz(t) \in - \sum (\nabla F_i(\sz(t)) + \partial G_i(\sz(t)))$ for
almost all $t > 0$. Consider now the map $\Phi : \sX \times \bR_+ \to \sX$,
$(a,t) \mapsto \sz(t)$, where $\sz(t)$ is the DI solution with the initial 
value $\sz(0) = a$. Then, $\Phi$ is a semi-flow \cite{bre-livre73, aub-cel-(livre)84}. 
% Then, $\Phi$ can be extended to a continuous map from
% $\bar\mD \times \bR_+$ to $\bar\mD$ that satisfies the conditions of being a
% so called semiflow \cite{bre-livre73, aub-cel-(livre)84}. 

Let us introduce the following function $\sI$ from $\sX^{\bN}$ to the space of 
$\bR_+ \to \sX$ continuous functions. For $u = (u_n) \in \sX^{\bN}$, the 
function $\su = \sI(u)$ is the continuous interpolated process obtained from 
$u$ as 
\[
\su(t) = u_n + \frac{u_{n+1} - u_n}{\gamma_{n+1}} (t - \tau_n)
\]
for $t \in [\tau_n, \tau_{n+1})$, where $\tau_n = \sum_{1}^n \gamma_k$. 
Consider the interpolated function $\sx = \sI((x_n))$. We shall prove
the two following facts: 
\begin{itemize} 
\item The sequence $(\| x_n - x_\star \|)$ is almost surely convergent for
each $x_\star \in \mZ$ (Prop.~\ref{fejer}); 
\item The process $\sx(t)$ is an almost sure Asymptotic Pseudo Trajectory (APT)
of the semi-flow $\Phi$, a concept introduced by Bena\"\i m and Hirsch in the 
field of dynamical systems \cite{ben-hir-96}. Namely, for each $T > 0$, 
% \[
\begin{equation}
\label{as-apt}
\sup_{u\in[0,T]} \| \sx(t+u) - \Phi(\sx(t), u) \| 
\xrightarrow[t\to\infty]{\text{a.s.}} 0 , 
\end{equation} 
%(remainder of the proof). 
% \] 
%where $\Pi_{\bar\mD}$ is the projector on $\bar\mD$. 
\end{itemize}
Taken together, these two results lead to the a.s.~convergence of $(x_n)$ to
some r.v. $X^\star$ supported by the set $\mZ$, 
as is shown by \cite[Cor.~3.2]{bia-hac-16}. The convergence of the
$(\bar x_n^i)_n$ stated by Th.~\ref{th:main} will be shown in the course of
the proof. 

Before entering the proof, we recall some well known facts relative to the
so called Moreau envelopes. For more details, the reader is referred to
\emph{e.g.} \cite[Ch.~2]{bre-livre73}, or \cite[Ch.~12]{bau-com-livre11}. 
The Moreau envelope of parameter $\gamma$ of a convex function $h$ with domain
$\sX$ is the function 
\[
h^\gamma(x) = \min_{w\in\sX} h(w) + (2\gamma)^{-1} \| w - x \|^2 \, . 
\]
The function $h^\gamma$ is a differentiable function on $\sX$, and its 
gradient is given by the equation 
\begin{equation}
\label{yosida} 
\nabla h^\gamma(x) = \gamma^{-1}( x - \prox_{\gamma h}(x)) \, . 
\end{equation} 
This gradient is a $\gamma^{-1}$-Lipschitz continuous function satisfying 
the inequality $\| \nabla h^\gamma(x) \| \leq \| \partial h^0(x) \|$, where
$\partial h^0(x)$ is the least-norm element of $\partial h(x)$. Finally, for 
all $(x,u) \in \sX \times \sX$ and for all $v\in\partial h(u)$, the inequality 
\begin{equation}
\label{yos-mon} 
\ps{\nabla h^\gamma(x) - v, \prox_{\gamma h}(x) - u} \geq 0 
\end{equation} 
holds true. 
With the formalism of the Moreau envelopes, the mapping $\sT_{\gamma,i}$ 
can be rewritten as 
\begin{equation}
\label{Tyosida} 
\sT_{\gamma,i}(x, s) = x - \gamma \nabla f_i(x,s) - 
\gamma\nabla g^\gamma_i(x - \gamma \nabla f_i(x,s), s) 
\end{equation} 
thanks to \eqref{yosida}, where $\nabla g^\gamma_i(\cdot, s)$ is the
gradient of the Moreau envelope $g_{i}^\gamma(\cdot, s)$. We shall adopt this form 
in the remainder of the proof. 

The following lemma is proven in Appendix~\ref{prf-nf-ng}. 
\begin{lemma}
\label{bnd-nf-ng} 
For $i=1,\ldots, L$, let 
\[
\bar x^i = (T_{\gamma,i}(\cdot,s^i) \circ \cdots \circ 
T_{\gamma,1}(\cdot,s^1))(x) . 
\]
Then, with Ass.~\ref{f-Lip}, there exists a measurable map $\kappa:\Xi^L\to \bR_+$
s.t. $\bE[\kappa(\xi)^\alpha]<\infty$ for all $\alpha\geq 1$ and s.t. for all $\bar s = (s^1,\ldots,s^L) \in \Xi^L$, 
\begin{align*} 
& \| \nabla f_i( \bar x^{i-1}, s^i ) \| 
\leq 
\kappa(\bar s) \sum_{k=1}^i \| \nabla f_k(x,s^k) \| + \| \nabla g_k^\gamma(x,s^k) \| \\
&\| \nabla g_i^\gamma(\bar x^{i-1} 
   - \gamma \nabla f_i( \bar x^{i-1}, s^i ), s^i) \| \\
& \ \ \ \ \ \ \ \ \ \ \ \ \ \ \ \ \ 
\leq \kappa(\bar s) \sum_{k=1}^i \| \nabla f_k(x,s^k) \| + \| \nabla g_k^\gamma(x,s^k) \| .  
\end{align*} 
\end{lemma}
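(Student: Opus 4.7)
The plan is to argue by induction on $i$, proving simultaneously the two stated bounds and an auxiliary \emph{drift} estimate
\[
\|\bar x^{j} - x\| \leq \gamma \, \kappa_j(\bar s) \, S_j(x, \bar s), \qquad j = 0, 1, \ldots, L,
\]
where $S_j(x, \bar s) = \sum_{k=1}^{j} (\|\nabla f_k(x,s^k)\| + \|\nabla g_k^\gamma(x,s^k)\|)$, and $\kappa_j$ is a polynomial in $K_1(s^1), \ldots, K_j(s^j)$ whose coefficients depend only on $L$ and on an upper bound for $\gamma$. Restricting to small enough $\gamma$ is harmless since only the tail of $(\gamma_n)$ matters in the application to Th.~\ref{th:main}. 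The base case $j = 0$ is trivial because $\bar x^0 = x$.

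For the inductive step, I would expand $\bar x^{j} - \bar x^{j-1}$ using \eqref{Tyosida}, then control its two pieces via (i) the $K_j(s^j)$-Lipschitz continuity of $\nabla f_j(\cdot, s^j)$ supplied by Ass.~\ref{f-Lip}, giving
\[
\|\nabla f_j(\bar x^{j-1}, s^j)\| \leq \|\nabla f_j(x, s^j)\| + K_j(s^j)\,\|\bar x^{j-1} - x\|,
\]
and (ii) the $\gamma^{-1}$-Lipschitz continuity of $\nabla g_j^\gamma(\cdot, s^j)$ recalled in the excerpt, which, combined with a triangle inequality, yields
\[
\|\nabla g_j^\gamma(\bar x^{j-1} - \gamma \nabla f_j(\bar x^{j-1}, s^j), s^j)\| \leq \|\nabla g_j^\gamma(x, s^j)\| + \gamma^{-1}\|\bar x^{j-1} - x\| + \|\nabla f_j(\bar x^{j-1}, s^j)\|.
\]
The crucial cancellation is that the $\gamma^{-1}$ appearing here is killed by the factor $\gamma$ provided by the inductive drift bound, so every term is $O(1)$ in $\gamma$. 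Multiplying by $\gamma$ and adding $\|\bar x^{j-1} - x\|$, then collecting terms, produces the drift bound at step $j$ with a recursion of the form $\kappa_j(\bar s) = C\,(1 + \gamma K_j(s^j))\,\kappa_{j-1}(\bar s) + C$.

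The two conclusions of the lemma then follow for $i$ by inserting the drift bound at level $j = i-1$ into the two inequalities (i) and (ii) above, specialised to $j = i$. Finally, the integrability statement $\bE[\kappa(\xi)^\alpha] < \infty$ for all $\alpha \geq 1$ is immediate from the polynomial form of $\kappa = \kappa_L$, H\"older's inequality, and Ass.~\ref{f-Lip}(2), which furnishes arbitrary moments for each $K_k(\xi^k)$. The main obstacle is to prevent the $\gamma^{-1}$ appearing in the Moreau gradient Lipschitz constant from producing a bound that blows up as $\gamma \to 0$; as sketched above, the correct pairing of this $\gamma^{-1}$ with the $\gamma$-factor hidden in the drift estimate is what makes the whole induction close.
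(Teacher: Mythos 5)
Your proof is correct and follows essentially the same route as the paper's: an induction on the composition index that combines the $K_i(s^i)$-Lipschitz continuity of $\nabla f_i$ with the $\gamma^{-1}$-Lipschitz continuity of the Moreau-envelope gradient, and closes because the $\gamma^{-1}$ is always paired with a displacement of size $O(\gamma)$. The only difference is bookkeeping — you carry an explicit drift bound $\|\bar x^{j}-x\|\leq\gamma\,\kappa_j(\bar s)\,S_j$ and compare everything to the base point $x$, whereas the paper compares $\bar x^{i-1}$ to $\bar x^{i-2}$ and telescopes down to $\bar x^{0}=x$ — and the integrability of $\kappa$ is handled identically via the moment assumption on the $K_i$.
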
 

Recall that we are studying the iterations 
$\bar x_{n+1}^i = 
\sT_{\gamma_{n+1},i}( \bar x_{n+1}^{i-1}, \xi_{n+1}^i)$, for 
$i = 1,\ldots, L$, $n \in \bN^{*}$, with $\bar x_{n+1}^0 = x_n$ and 
$x_{n+1} = \bar x_{n+1}^L$. In this section and in Appendix~\ref{anx-prf-main}, 
we shall write for conciseness, for any $x_\star\in\mZ$,
\begin{align*}
\nabla g^\gamma_i &= 
 \nabla g_i^{\gamma_{n+1}}(\bar x_{n+1}^{i-1} 
- \gamma_{n+1} \nabla f_i(\bar x_{n+1}^{i-1}, \xi_{n+1}^i), \xi_{n+1}^i), \\ 
\prox_{\gamma g_i} &= 
\prox_{\gamma g_i(\cdot,\xi_{n+1}^i)}(
\bar x_{n+1}^{i-1} 
   - \gamma_{n+1} \nabla f_i(\bar x_{n+1}^{i-1}, \xi_{n+1}^i) ) , \\ 
\nabla f_i &= \nabla f_i(\bar x_{n+1}^{i-1}, \xi_{n+1}^i), \\ 
\nabla f_i^\star &= \nabla f_i(x_\star,\xi_{n+1}^i) \ \text{where } 
    x_\star \in \mZ, \\
\varphi_i &= \varphi_i(\xi_{n+1}^i), \ \text{(see Ass.~\ref{selec})}  
\quad \text{and} \\ 
\gamma &= \gamma_{n+1}. 
\end{align*} 
The following proposition is analoguous to \cite[Prop.~1]{bia-16} 
or to \cite[Prop.~6.1]{bia-hac-16}: 
\begin{prop}
\label{fejer}
Let Ass.~\ref{step}--\ref{selec} hold true. Then the following facts hold true: 
\begin{enumerate}
\item\label{sto-fejer} For each $x_\star \in \mZ$, the sequence 
$(\| x_n - x_\star\|)$ converges almost surely.  
\item\label{mart-L2}
$\bE \left[ \sum_{i=1}^L \sum_{n=1}^\infty \gamma^2 
( \| \nabla g^\gamma_i \|^2 + \| \nabla f_i \|^2 ) \right]< \infty$. 
\item\label{xbar-x}
For each $i$, $\bar x_{n+1}^i - x_n \to 0$ almost surely. 
\end{enumerate} 
\end{prop}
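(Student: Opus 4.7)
The plan is to mimic the approach of \cite[Prop.~6.1]{bia-hac-16}, turning a per-iterate identity into an almost-supermartingale inequality to which the Robbins--Siegmund lemma applies. The new difficulty compared to the $L=1$ case of \cite{bia-hac-16} is that the $L$ inner half-steps $\bar x_{n+1}^i$ are driven by the correlated noises $\xi_{n+1}^1,\ldots,\xi_{n+1}^L$, so one has to telescope carefully and defer the use of $\bE_n$ until all couplings are made explicit.

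First, fix $x_\star\in\mZ$ and use the Moreau-envelope form \eqref{Tyosida} of the recursion to write
\begin{align*}
\|\bar x_{n+1}^i - x_\star\|^2
&= \|\bar x_{n+1}^{i-1} - x_\star\|^2 \\
&\quad - 2\gamma\,\langle \nabla f_i + \nabla g_i^\gamma,\ \bar x_{n+1}^{i-1} - x_\star\rangle \\
&\quad + \gamma^2\,\|\nabla f_i + \nabla g_i^\gamma\|^2.
\end{align*}
For the cross term, convexity of $f_i(\cdot,\xi_{n+1}^i)$ yields $\langle \nabla f_i - \nabla f_i^\star,\ \bar x_{n+1}^{i-1} - x_\star\rangle \geq 0$, while the subgradient inequality \eqref{yos-mon} applied with $u=x_\star$ and $v=\varphi_i$ gives, after noting that $\prox_{\gamma g_i} = \bar x_{n+1}^i = \bar x_{n+1}^{i-1} - \gamma(\nabla f_i + \nabla g_i^\gamma)$,
\[
\langle \nabla g_i^\gamma,\ \bar x_{n+1}^{i-1} - x_\star\rangle
\geq \langle \varphi_i,\ \bar x_{n+1}^i - x_\star\rangle + \gamma\,\langle \nabla g_i^\gamma,\ \nabla f_i + \nabla g_i^\gamma\rangle.
\]
Telescoping over $i=1,\ldots,L$ then gives a bound on $\|x_{n+1} - x_\star\|^2 - \|x_n - x_\star\|^2$ involving a negative quadratic term $-\gamma^2\sum_i\|\nabla g_i^\gamma\|^2$ plus scalar products with $\bar x_{n+1}^{i-1}-x_\star$ and $\bar x_{n+1}^i - x_\star$.

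The central trick is the splitting $\bar x_{n+1}^i - x_\star = (\bar x_{n+1}^i - x_n) + (x_n - x_\star)$. The piece paired with $(x_n - x_\star)$ involves only $\nabla f_i^\star$ and $\varphi_i$, which are functions of $\xi_{n+1}^i$ alone; its conditional expectation w.r.t.~$\mcF_n$ vanishes thanks to the optimality identity~\eqref{eq:optimality-2}. The piece paired with $(\bar x_{n+1}^{i-1}-x_n)$ or $(\bar x_{n+1}^i-x_n)$ is an error term that Lemma~\ref{bnd-nf-ng} lets us control: each such intermediate displacement is $O(\gamma)$ times a sum of norms $\|\nabla f_k(x_n,\xi_{n+1}^k)\|$ and $\|\nabla g_k^\gamma(x_n,\xi_{n+1}^k)\|$. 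Using Ass.~\ref{f-Lip} to replace the latter by $\|\nabla f_k^\star\|$ plus a Lipschitz term in $\|x_n - x_\star\|$, and the analogous bound $\|\nabla g_k^\gamma\| \le \|\partial g_k^0(x_n,\xi_{n+1}^k)\|$ together with Ass.~\ref{bnd-mom}, all cross terms fold into a contribution $O(\gamma^2)(1 + \|x_n - x_\star\|^2)$ after taking $\bE_n$ and invoking the $(2+\varepsilon)$-integrability of the representation (Ass.~\ref{selec}) via Cauchy--Schwarz.

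Putting these pieces together yields an almost-supermartingale inequality of the form
\[
\bE_n\|x_{n+1} - x_\star\|^2 \leq (1 + C\gamma_{n+1}^2)\|x_n - x_\star\|^2 + C\gamma_{n+1}^2 - U_n,
\]
where $U_n \geq 0$ dominates $\gamma_{n+1}^2\,\bE_n\!\sum_i\|\nabla g_i^\gamma\|^2$ up to a further $O(\gamma_{n+1}^2)$ error. Since $\sum\gamma_n^2 < \infty$ by Ass.~\ref{step}, the Robbins--Siegmund lemma gives the a.s.~convergence of $\|x_n - x_\star\|$ (item~\ref{sto-fejer}) and the summability $\sum \gamma_{n+1}^2\,\bE\|\nabla g_i^\gamma\|^2 < \infty$; combined with Ass.~\ref{f-Lip} this also yields $\sum\gamma_{n+1}^2\,\bE\|\nabla f_i\|^2<\infty$, proving item~\ref{mart-L2}. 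Finally, item~\ref{xbar-x} follows because $\bar x_{n+1}^i - x_n$ is by Lemma~\ref{bnd-nf-ng} bounded by $\gamma_{n+1}$ times sums of the gradient norms controlled in item~\ref{mart-L2}, so it tends to $0$ a.s. The main obstacle in the whole argument is keeping every error term at order $\gamma_{n+1}^2$ (not $\gamma_{n+1}$) despite the compounding of $L$ steps and their correlated noises; this is precisely why Lemma~\ref{bnd-nf-ng} with all-moment bounds on $K_i(\xi^i)$ and the $(2+\varepsilon)$-integrable representation are needed.
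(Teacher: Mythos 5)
Your overall route is the paper's: expand $\|\bar x_{n+1}^i-x_\star\|^2$ step by step, kill the $f_i$ cross term by monotonicity of $\nabla f_i(\cdot,\xi^i)$, handle the $g_i$ cross term through \eqref{yos-mon} at the point $\prox_{\gamma g_i}=\bar x_{n+1}^i$, split $\bar x_{n+1}^{i-1}-x_\star$ into $(\bar x_{n+1}^{i-1}-x_n)+(x_n-x_\star)$ so that the representation $(\nabla f_i^\star,\varphi_i)$ and \eqref{eq:optimality-2} annihilate the drift under $\bE_n$, and close with Robbins--Siegmund. However, there is one step that does not work as you describe: your disposal of the $\|\nabla g_i^\gamma\|^2$ error terms via $\|\nabla g_k^\gamma\|\le\|\partial g_k^0\|$ and Ass.~\ref{bnd-mom}. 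First, Prop.~\ref{fejer} is stated under Ass.~\ref{step}--\ref{selec} only; Ass.~\ref{bnd-mom} is not available here (it is used later, for the APT part). Second, even if it were, it only provides $(1+\eta)$-moments of $\partial g_i^0$, which cannot control the second moments $\bE_n\|\nabla g_i^\gamma\|^2$ appearing in your cross terms. Third, Ass.~\ref{bnd-mom} is uniform over \emph{compact} sets, and at this stage you do not yet know that $(x_n)$ stays in a compact set --- that is precisely the content of item~\ref{sto-fejer} --- so invoking it along the trajectory is circular. The paper's way out is structural rather than moment-based: every positive occurrence of $\|\nabla g_\ell^\gamma\|^2$ generated by the cross terms is given an arbitrarily small coefficient $\eta\gamma^2$ (via $|\ps{a,b}|\le\eta\|a\|^2+C\|b\|^2$) and is then absorbed into the strictly negative term $-2\gamma^2\|\nabla g_i^\gamma\|^2$ produced by the $A_3$ computation, leaving a net $-(1-\eta)\gamma^2\sum_k\|\nabla g_k^\gamma\|^2$ that both survives to give item~\ref{mart-L2} and never needs to be integrated against any assumption on $\partial g^0$.

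A second, lesser point: the term $\bE_n\bigl[K_k^2(\xi_{n+1}^k)\,\|\bar x_{n+1}^{k-1}-x_\star\|^2\bigr]$ arising when you replace $\|\nabla f_k\|$ by $\|\nabla f_k^\star\|$ plus a Lipschitz term cannot be dispatched by a single Cauchy--Schwarz, because $\bar x_{n+1}^{k-1}$ is itself built from the correlated $\xi_{n+1}^1,\ldots,\xi_{n+1}^{k-1}$ and its squared distance to $x_\star$ again contains all the earlier gradient norms. The paper runs an explicit finite induction over $k$ (inequality \eqref{eq:induction}), propagating monomials $P_k$ of the $K_\ell^2$'s and using H\"older with the all-moments bound of Ass.~\ref{f-Lip} at each stage, to conclude $\bE_n[P_k\|\bar x_{n+1}^{k-1}-x_\star\|^2]\le C(1+\|x_n-x_\star\|^2)$. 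You correctly identify that the all-moment bounds on $K_i$ and the $(2+\varepsilon)$-integrable representation are the ingredients, but the recursive mechanism that makes them sufficient needs to be spelled out; without it the claim that ``all cross terms fold into $O(\gamma^2)(1+\|x_n-x_\star\|^2)$'' is not justified.
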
 
This proposition is shown in Appendix~\ref{anx-prf-fejer}.  
It remains to establish the almost sure APT to prove
Th.~\ref{th:main}. We just provide here the main arguments of this part of
the proof, since it is similar to its analogue in \cite{bia-hac-16}. 

Let us write 
\begin{multline*} 
x_{n+1} = x_n - \gamma_{n+1} 
\sum_{i=1}^L \Bigl(\nabla f_i(\bar x_{n+1}^{i-1}, \xi_{n+1}^i) \\ 
  + \nabla g_i^{\gamma_{n+1}}(\bar x_{n+1}^{i-1}
 - \gamma_{n+1} \nabla f_i(\bar x_{n+1}^{i-1}, \xi_{n+1}^i), \xi_{n+1}^i) 
  \Bigr)\, , 
\end{multline*} 
and let us also define the function 
\begin{align*} 
H_\gamma(x,(s^1,\ldots,s^L)) &= \sum_{i=1}^L 
\left[\nabla f_i(\bar x^{i-1}, s^i) \right. \\ 
&\phantom{=} \ \ \ \ \ \ \ \ 
 \left. + \nabla g_i^\gamma( \bar x^{i-1} 
   - \gamma \nabla f_i(\bar x^{i-1},s^i), s^i) \right]  \, , 
\end{align*} 
where  we recall the notation
$\bar x^i = (T_{\gamma,i}(\cdot,s^i) \circ \cdots \circ 
T_{\gamma,1}(\cdot,s^1))(x)$. 
By Lem.~\ref{bnd-nf-ng} and Ass.~\ref{f-Lip}, \ref{selec} and \ref{bnd-mom},
$\bE[\|H_\gamma(x,\xi)\|]<\infty$ and we define:
\[ 
h_\gamma(x) = \bE [H_\gamma(x,\xi)]\,.
\] 
Note that $x_{n+1} = x_n-\gamma_{n+1}H_{\gamma_{n+1}}(x_n,\xi_{n+1})$.  
Defining the $(\mcF_n)$ martingale 
\[
M_n = \sum_{k=1}^n x_k - \bE_{k-1}[x_k]
\] 
it is clear that $x_{n+1} = x_n - \gamma_{n+1} h_{\gamma_{n+1}}(x_n) + 
(M_{n+1} - M_n)$. Let us rewrite this equation in a form involving the 
continuous process $\sx = \sI((x_n))$. Defining $\sM = \sI((M_n))$, and 
writing 
\[
r(t) = \max \{ k\geq 0 \, : \, \tau_k \leq t \}, \quad t \geq 0, 
\]
we obtain 
\begin{align} 
\sx(\tau_n + t) - \sx(\tau_n) &= 
 - \int_0^t h_{\gamma_{r(\tau_n+u)+1}}(x_{r(\tau_n+u)}) \, du  \nonumber \\ 
&\phantom{=} + \sM(\tau_n+t) - \sM(\tau_n) \, . 
\label{eq-integ} 
\end{align} 

The first argument of the proof of the almost sure APT is a compactness
argument on the sequence of continuous processes $(\sx(\tau_n+\cdot))_n)$.
Specifically, we show that on a $\bP$-probability one set, this sequence is
equicontinuous and bounded. By Ascoli's theorem, this sequence admits
accumulation points in the topology of the uniform convergence on the compacts
of $\bR_+$. As a second step, we show that these accumulation points are
solutions to the differential inclusion \eqref{eq:di-intro}, which is in fact
a reformulation of the almost sure APT property \eqref{as-apt}. 

Since 
\begin{align*}
&\bE \left[ \| x_{n+1} - \bE_n x_{n+1} \|^2 \right] \\
&= \gamma^2 \bE \left[ \Bigl\| 
\sum_{i=1}^L (\nabla f_i - \bE_n \nabla f_i) + 
\sum_{i=1}^L (\nabla g_i^\gamma - \bE_n \nabla g_i^\gamma) \Bigr\|^2 \right]\\
&\leq C \gamma^2 \bE \left[ \sum_{i=1}^L 
  (\| \nabla f_i \|^2 + \| \nabla g_i^\gamma \|^2) \right] \, , 
\end{align*}
we obtain by Prop.~\ref{fejer}--\ref{mart-L2}) that 
$\sup_n \bE [\| M_n \|^2] < \infty$. Thus, the martingale $M_n$ converges almost
surely, which implies that the sequence 
$(\sM(\tau_n+\cdot) - \sM(\tau_n))_{n}$ converges almost surely to zero,  
uniformly on $\bR_+$. 

By Ass.~\ref{f-Lip} and~\ref{selec}, 
$\sup_{x \in \cK} \int \| \nabla f_i(x,s) \|^2 \mu_i(ds) < \infty$
for each compact $\cK \subset \sX$ and each $i$. By 
Ass.~\ref{bnd-mom}, we also have \begin{align*}
\sup_{x \in \cK} \int \| \nabla g_i^\gamma(x,s) \|^{1+\varepsilon} \mu_i(ds) 
&\leq 
\sup_{x \in \cK} \int \| \partial g_i^0(x,s) \|^{1+\varepsilon} \mu_i(ds) \\
&< \infty \, .
\end{align*} 
Thus by Lem.~\ref{bnd-nf-ng} and Hölder inequality, and using the fact that the
sequence  $(x_n)$ is almost surely bounded by Prop.~\ref{fejer}--\ref{sto-fejer}), 
it can be shown that
\[
\sup_n \| h_{\gamma_{n+1}}(x_n) \| < \infty \quad \text{w.p. } 1 \, , 
\]
Inspecting \eqref{eq-integ}, we thus obtain that
the sequence $(\sx(\tau_n+\cdot))_n$ is equicontinuous and bounded with 
probability one. 

In order to characterize its cluster points, choose $T > 0$, and consider an elementary
event on the probability one set where $\sx$ is equicontinuous and bounded on
$[0,T]$.  With a small notational abuse, let $(n)$ be a subsequence along which
$(\sx(\tau_n + \cdot))_n$ converges on $[0,T]$ to some continuous function
$\sz(t)$. This function then is written as 
\[
\sz(t) - \sz(0) \!=\! 
- \lim_{n\to\infty} \int_0^t \!du \int_{\Xi} \!\mu(ds) \, 
 H_{\gamma_{r(\tau_n+u)+1}}(x_{r(\tau_n+u)}, s) . 
\] 
By the boundedness of $(x_n)$ (Prop.~\ref{fejer}-\ref{sto-fejer})), 
Lem.~\ref{bnd-nf-ng}, and Ass.~\ref{f-Lip},~\ref{selec} and \ref{bnd-mom}, 
the sequence of functions 
$(H_{\gamma_{r(\tau_n+u)+1}}(x_{r(\tau_n+u)}, s))_n$ in the parameters
$(u,s)$ is bounded in the Banach space $\cL^{1+\varepsilon}(du\otimes \mu)$, for some $\varepsilon > 0,$ 
where $du$ is the Lebesgue measure on $[0,T]$. 
Since the unit ball of $\cL^{1+\varepsilon}(du\otimes \mu)$ is weakly compact
in this space by the Banach-Alaoglu theorem, since this space is reflexive, we can extract a subsequence (still denoted as $(n)$) such that
$H_{\gamma_{r(\tau_n+u)+1}}(x_{r(\tau_n+u)}, s)$ converges weakly in
$\cL^{1+\varepsilon}(du\otimes \mu)$, as $n\to\infty$, to a function 
$Q(u,s)$.
The remainder of the proof consists in showing that $Q$ can be 
written as 
\[
Q(u,s) = \sum_{i=1}^L \left( b_i(u, s^i) + p_i(u, s^i) \right) \, , 
\]
where $b_i(u, s^i) = \nabla f_i(\sz(u), s^i)$ and 
$p_i(u, s^i) \in \partial g_i(\sz(u), s^i)$ for $du\otimes \mu_i$-almost 
all $(u,s^i)$. Indeed, once this result is established, it becomes clear that
$\sz(t)$ is an absolutely continuous function whose derivative satisfies
almost everywhere the inclusion 
$\dot\sz(t) \in - \sum_i (\nabla F_i(\sz(t)) + \partial G_i(\sz(t)))$,
noting that we can exchange the integration and the differentiation 
(resp.~the subdifferentiation) in the expression of $\nabla F$ (resp.~of 
$\partial G$). 

We just provide here the main argument of this part of the proof, since it is 
similar to its analogue in \cite{bia-hac-16}. Let $i \in \{1,\ldots,L\}$. Let us focus on the sequence 
of functions of $(u,s) \in [0,T] \times \Xi$ defined by 
\[
\nabla g^{\gamma_{r(\tau_n+u)+1}}_i( 
\bar x^{i-1}_{r(\tau_n+u)+1} - \gamma_{r(\tau_n+u)+1} 
\nabla f_i( \bar x^{i-1}_{r(\tau_n+u)+1}, s), s)
\]
and indexed by $n$.
This sequence is bounded in $\cL^{1+\varepsilon}(du\otimes \mu_i)$ on a probability one set, as a function of $(u,s)$, for the same reasons as those explained above for $(H_{\gamma_{r(\tau_n+u)+1}}(x_{r(\tau_n+u)}, s))_n$. We need to
show that any weak limit point $p_i(u,s)$ of this sequence satisfies 
$p_i(u, s) \in \partial g_i(\sz(u), s)$ for $du\otimes \mu_i$-almost all 
$(u,s)$. Using the fact that $\sx(\tau_n+\cdot) \to \sz(\cdot)$ almost surely, 
along with the inequality 
$\ps{\nabla g_i^\gamma(x,s) - w, \prox_{\gamma g_i(\cdot,s)}(x) - v} 
\geq 0$, valid for all $x,v\in \sX$ and $w\in\partial g_i(v,s)$, 
we show that $\ps{p_i(u,s) - w, \sz(u) - v} \geq 0$ for 
$du\otimes \mu_i$-almost all $(u,s)$. Since $v\in \sX$ and 
$w\in\partial g_i(v,s)$ are arbitrary, we get that 
$p_i(u,s) \in \partial g_i(\sz(u), s)$ by a well known property of the 
subdifferentials of $\Gamma_0(\sX)$ functions. 

\section{Conclusion} 
\label{sec-conc} 

A fast regularized optimization algorithm over large unstructured graphs was
introduced in this paper. This algorithm is a variant of the proximal gradient
algorithm that operates on randomly chosen simple paths. It belongs to the
family of stochastic approximation algorithms with a decreasing step size. One
future research direction consists in a fine convergence analysis of this
algorithm, hopefully leading to a provably optimal choice of the total walk
length $L$. Another research direction concerns the constant step analogue of
the described algorithm, whose transient behavior could be interesting in
many applicative contexts in the fields of statistics and learning.
%\asnote{D'autres methodes 1D? GTF avec k > 1}

\appendices 

\section{Proofs for Sec.~\ref{sec:prf-main}}
\label{anx-prf-main} 

\subsection{Proof of Lem.~\ref{bnd-nf-ng}} 
\label{prf-nf-ng} 
We start by writing 
$\| \nabla f_i(\bar x^{i-1},s^i) \| \leq \| \nabla f_i(\bar x^{i-2},s^i) \| 
+ K_i(s^i) \| \bar x^{i-1} - \bar x^{i-2}\|$, where $K_i(s^i)$ is provided by
Ass.~\ref{f-Lip}. Using the identity 
$\bar x^{i-1} = \sT_{\gamma,i-1}(\bar x^{i-2})$, where $\sT_{\gamma,i}$ is 
given by~\eqref{Tyosida}, and recalling that $\nabla g^\gamma_i(\cdot,s^i)$ is 
$\gamma^{-1}$-Lipschitz, we get 
\begin{multline*} 
\| \nabla f_i(\bar x^{i-1},s^i) \| \leq 
  \| \nabla f_i(\bar x^{i-2},s^i) \| \\ 
  + \gamma K_i(s^i) (2 \| \nabla f_{i-1}(\bar x^{i-2},s^{i-1}) \|  
+ \| \nabla g_{i-1}^\gamma(\bar x^{i-2},s^{i-1}) \| ) . 
\end{multline*} 
Similarly, 
\begin{align*} 
& \| \nabla g_i^\gamma(\bar x^{i-1} 
    - \gamma \nabla f_i(\bar x^{i-1},s^i), s^i) \| \\
&\leq 
\| \nabla f_i(\bar x^{i-1},s^i ) \| + 
2 \| \nabla f_{i-1}(\bar x^{i-2},s^{i-1} ) \| \\
&\phantom{=} + 
\| \nabla g_i^\gamma(\bar x^{i-2},s^i ) \| + 
\| \nabla g_{i-1}^\gamma(\bar x^{i-2},s^{i-1} ) \| . 
\end{align*}
% Since $C \in \cL^L(\mu_i)$ for all $i$, we have $\int_{\Xi^i} \!\mu_1(ds^1)\dots \mu_i(ds^i) C(s^1)\dots C(s^i) \, < \infty.$ 
Iterating down to $\bar x^0 = x$, we get the result since for every $i$, $K_i(\xi^i)$ admit all their moments.

\subsection{Proof of Prop.~\ref{fejer}} 
\label{anx-prf-fejer}

Let $x_\star$ be an arbitrary element of $\mZ$. Let $i \in \{1,\ldots,L\}$. We start by writing 
\begin{align*} 
\|\bar x_{n+1}^{i} - x_\star \|^2 &= \| \bar x_{n+1}^{i} - \bar x_{n+1}^{i-1} \|^2 
 + \| \bar x_{n+1}^{i-1} - x_\star \|^2 \\
&\phantom{=} + 2 \ps{ \bar x_{n+1}^{i} - \bar x_{n+1}^{i-1}, 
\bar x_{n+1}^{i-1} - x_\star} \\
&= \| \bar x_{n+1}^{i-1} - x_\star \|^2 + 
  \gamma^2 \| \nabla f_i + \nabla g^\gamma_i \|^2 \\ 
&\phantom{=}
-2\gamma\ps{\nabla f_i - \nabla f_i^\star, \bar x_{n+1}^{i-1} - x_\star} \\
&\phantom{=}
-2\gamma\ps{\nabla g_i^\gamma - \varphi_i, \bar x_{n+1}^{i-1} - x_\star} \\
&\phantom{=}
-2\gamma\ps{\nabla f_i^\star + \varphi_i, \bar x_{n+1}^{i-1} - x_\star} \\
&= \| \bar x_{n+1}^{i-1} - x_\star \|^2 + A_1 + A_2 + A_3 + A_4 . 
\end{align*} 
Most of the proof consists in bounding the $A_i$'s.  For an arbitrary
$\beta > 0$, it holds that  
$|\ps{a,b}|\leq (\beta/2) \|a\|^2 + \|b\|^2 / (2\beta)$.  We rewrite this
inequality as $|\ps{a,b}|\leq \eta \|a\|^2 + C \|b\|^2$, where $\eta > 0$ is a
constant chosen as small as desired, and $C > 0$ is fixed accordingly.
Similarly, $\| a + b \|^2 \leq (1+\eta) \| a \|^2 + C \|b \|^2$ where 
$\eta > 0$ is as small as desired.  We shall repeatedly use these inequalities
without mention. Similarly to $C$, the value of $\eta$ can change from a line 
of calculation to another. 

Starting with $A_1$, we have 
\[
A_1 \leq 
  \gamma^2(1+\eta) \| \nabla g^\gamma_i \|^2 
  + C \gamma^2 \| \nabla f_i \|^2 . 
\] 
We have $A_2\leq 0$ by the convexity of $f_L$. We can write 
\begin{align*}
A_3 &= 
-2\gamma\ps{\nabla g_i^\gamma - \varphi_i, \prox_{\gamma g_i} - x_\star} \\
&\phantom{=} -2\gamma\ps{\nabla g_i^\gamma - \varphi_i, \bar x_{n+1}^{i-1} - 
  \gamma \nabla f_i - \prox_{\gamma g_i} } \\
&\phantom{=} -2\gamma\ps{\nabla g_i^\gamma - \varphi_i, \gamma \nabla f_i } 
\end{align*} 
By \eqref{yos-mon}, the first term at the right hand side is $\leq 0$. 
By \eqref{yosida}, 
$\bar x_{n+1}^{i-1} - \gamma \nabla f_i - \prox_{\gamma g_i} = 
\gamma \nabla g_i^\gamma$. Thus, 
\begin{align*}
A_3 &\leq 
-2 \gamma^2 \| \nabla g_i^\gamma \|^2 
+ 2\gamma^2 \ps{\varphi_i, \nabla g_i^\gamma + \nabla f_i } 
- 2\gamma^2 \ps{\nabla g_i^\gamma, \nabla f_i} \\
& \leq -(2-\eta) \gamma^2 \| \nabla g_i^\gamma \|^2 
+ C\gamma^2 \| \nabla f_i \|^2 + C\gamma^2 \|\varphi_i \|^2  
\end{align*} 
As regards $A_4$, we have 
\begin{align*}
A_4 &= -2\gamma\ps{\nabla f_i^\star + \varphi_i, x_{n} - x_\star} \\
& \phantom{=} - 2\gamma \ps{\nabla f_i^\star + \varphi_i, \bar x_{n+1}^{i-1} - x_{n}}.
\end{align*}
Gathering these inequalities, we get 
\begin{align*} 
\|\bar x_{n+1}^{i} - x_\star \|^2 & \leq \| \bar x_{n+1}^{i-1} - x_\star \|^2 
- (1-\eta) \gamma^2 \| \nabla g^\gamma_i \|^2\\ 
 & \phantom{=} + C \gamma^2 \| \nabla f_i \|^2 + C\gamma^2 \|\varphi_i \|^2 \\
&\phantom{=} -2\gamma\ps{\nabla f_i^\star + \varphi_i, x_n - x_\star}\\
&\phantom{=} - 2\gamma \ps{\nabla f_i^\star + \varphi_i, \bar x_{n+1}^{i-1} - x_{n}}
\end{align*} 
Iterating over $i$, we get 
\begin{align*} 
\|\bar x_{n+1}^{i} - x_\star \|^2 & \leq \| x_n - x_\star \|^2 
- (1-\eta) \gamma^2 \sum_{k=1}^{i} \| \nabla g^\gamma_k \|^2\\ 
 & \phantom{=} + C \gamma^2 \sum_{k=1}^{i} \| \nabla f_k \|^2 + C\gamma^2 \sum_{k=1}^{i} \|\varphi_k \|^2 \\
&\phantom{=} -2\gamma \sum_{k=1}^{i} \ps{\nabla f_k^\star + \varphi_k, x_n - x_\star}\\
&\phantom{=} -2\gamma \sum_{k=1}^{i} \ps{\nabla f_k^\star + \varphi_k, \bar x_{n+1}^{k-1} - x_n} . 
\end{align*} 
The summand in the last term can be written as 
\begin{align*}
& -2\gamma\ps{\nabla f_k^\star + \varphi_k, \bar x_{n+1}^{k-1} - x_n}\\
=& -2\gamma \sum_{\ell=1}^{k-1} \ps{\nabla f_k^\star + \varphi_k, \bar x_{n+1}^{\ell} - \bar x_{n+1}^{\ell-1}}\\
=& -2\gamma^2 \sum_{\ell=1}^{k-1} \ps{\nabla f_k^\star + \varphi_k, \nabla f_\ell + \nabla g^\gamma_\ell}\\
\phantom{=} & \leq \gamma^2 C \|\nabla f_k^\star\|^2 + \gamma^2 C\|\varphi_k\|^2 \\
&\phantom{=} + \gamma^2 C \sum_{\ell=1}^{k-1} \|\nabla f_\ell\|^2 + \gamma^2 \eta \sum_{\ell=1}^{k-1} \|\nabla g^\gamma_\ell\|^2. 
\end{align*}
where we used $|\ps{a,b}| \leq \eta\|a\|^2 + C\|b\|^2$ as above.
Therefore, for all $i = 1,\ldots,L,$
\begin{align} 
\notag
\|\bar x_{n+1}^{i} - x_\star \|^2 & \leq \| x_n - x_\star \|^2 
 - (1-\eta) \gamma^2 \sum_{k=1}^{i} \| \nabla g^\gamma_k \|^2\\ \notag
 & \phantom{=} + C \gamma^2 \sum_{k=1}^{i} \| \nabla f_k^\star \|^2 + C\gamma^2 \sum_{k=1}^{i} \|\varphi_k \|^2 \\ \notag
&\phantom{=} + C \gamma^2 \sum_{k=1}^{i} \| \nabla f_k \|^2 \\ 
&\phantom{=} -2\gamma \ps{\sum_{k=1}^{i} \nabla f_k^\star + \varphi_k, x_n - x_\star}.
\label{eq:i}
\end{align}
We consider the case $i = L$. Using Ass.~\ref{selec}, 
\begin{align*} 
\bE_n \left[ \|\bar x_{n+1}^{L} - x_\star \|^2 \right] & \leq \| x_n - x_\star \|^2 \\
& \phantom{=} - (1-\eta) \gamma^2 \bE_n \left[ \sum_{k=1}^{L} \| \nabla g^\gamma_k \|^2 \right]
 \\
 %\bE_n \sum_{k=1}^{i} \| \nabla f_k^\star \|^2 + C\gamma^2 \bE_n \sum_{k=1}^{i} \|\varphi_k \|^2 \\
&\phantom{=} + C \gamma^2 + C \gamma^2 \sum_{k=1}^{L} \bE_n [\| \nabla f_k \|^2 ]\\
&\phantom{=} -2\gamma \bE_n \left[ \ps{\sum_{k=1}^{L} \nabla f_k^\star + \varphi_k, x_n - x_\star} \right].
\end{align*}
The last term at the right hand side is zero since 
\begin{align*}
&\bE_n \left[ \ps{\sum_{k=1}^{L} \nabla f_k^\star + \varphi_k, x_n - x_\star} \right] \\
=& \ps{\bE \left[ \sum_{k=1}^{L} \nabla f_k^\star + \varphi_k \right] , x_n - x_\star} = 0
\end{align*}
by definition of $\nabla f_k^\star$ and $\varphi_k$. 
Besides, using Ass.~\ref{f-Lip}, for all $k$ we have $$\bE_{n}[\| \nabla f_k \|^2] \leq C \bE_{n}[\| \nabla f_k^\star \|^2] + C \bE_n [K^2_k(\xi_{n+1}^{k})\|\bar x_{n+1}^{k-1} - x_\star\|^2]. $$ 
Then,
\begin{align} 
\notag
\bE_n [\| x_{n+1} - x_\star \|^2] & \leq \| x_n - x_\star \|^2 + C \gamma^2 \\ \notag
&\phantom{=} - (1-\eta) \gamma^2 \bE_n \left[\sum_{k=1}^{L} \| \nabla g^\gamma_k \|^2 \right]\\ 
 %& \phantom{=} + C \gamma^2 \bE_n \sum_{k=1}^{i} \| \nabla f_k^\star \|^2 + C\gamma^2 \bE_n \sum_{k=1}^{i} \|\varphi_k \|^2 \\
&\phantom{=} + C \gamma^2 \sum_{k=1}^{L} \bE_n \left[ K^2_k(\xi_{n+1}^{k})\|\bar x_{n+1}^{k-1} - x_\star\|^2 \right]
\label{eq:L}
\end{align}
We shall prove by induction that for all r.v $P_{k}$ which is a monomial expression of the r.v $K^2_k(\xi_{n+1}^{k}),\ldots,K^2_L(\xi_{n+1}^{L})$, there exists $C > 0$ such that 
\begin{equation}
\bE_n \left[ P_{k} \|\bar x_{n+1}^{k-1} - x_\star\|^2 \right] \leq C(1 + \|x_n - x_\star\|^2),
\label{eq:induction}
\end{equation}
for all $k = 1,\ldots,L.$ Note that such a r.v $P_k$ is independent of $\mcF_n$, non-negative and for all $\alpha > 0$, $\bE[P_k^\alpha]<\infty$ by Ass.~\ref{f-Lip}. Using Ass.~\ref{f-Lip}, the induction hypothesis~\ref{eq:induction} is satisfied if $k = 1$. Assume that it holds true until the step $k-1$ for some $k \leq L$. Using~\ref{eq:i} and Ass.~\ref{f-Lip},   
\begin{align}
\notag
 \bE_n \left[ P_{k}\|\bar x_{n+1}^{k-1} - x_\star \|^2 \right]
& \leq C\| x_n - x_\star \|^2 \\ \notag
& \phantom{=} + C\gamma^2 \bE_n \left[ P_{k} \sum_{\ell=1}^{k-1} \| \nabla f_\ell \|^2 \right]\\ \notag
 & \phantom{=} + C\gamma^2 \bE_n \left[ P_{k} \sum_{\ell=1}^{k-1} \|\varphi_\ell \|^2 + \| \nabla f_\ell^\star \|^2 \right] \\ 
&\phantom{=} -2 \gamma \bE_n P_k  \ps{\sum_{\ell=1}^{k-1} \nabla f_\ell^\star + \varphi_\ell, x_n - x_\star}. 
\label{eq:heredite}
\end{align}
The last term at the right hand side can be bounded as
\begin{align}
\notag
&-2\gamma \bE_n P_k \ps{\sum_{\ell=1}^{k-1} \nabla f_\ell^\star + \varphi_\ell, x_n - x_\star} \\ \notag
\leq& C \|x_n - x_\star\|^2 + C \bE_n \left[ P_k \sum_{\ell=1}^{k-1} \|\nabla f_\ell^\star\|^2 + \|\varphi_\ell \|^2 \right] \\ 
\leq& C \|x_n - x_\star\|^2 + C
\label{eq:heredite1}
\end{align}
using Hölder inequality and Ass.~\ref{selec}. 
For all $\ell = 1,\ldots,k-1$,
\begin{align}
\notag
\bE_{n}[P_k \| \nabla f_\ell \|^2] &\leq C \bE_{n}[P_k \| \nabla f_\ell^\star \|^2] \\ \notag
&\phantom{=} + C \bE_n \left[ P_k K^2_\ell(\xi_{n+1}^{\ell})\|\bar x_{n+1}^{\ell-1} - x_\star\|^2 \right] \\
&\leq C(1 + \|x_n - x_\star\|^2)
\label{eq:heredite2}
\end{align}
where we used Hölder inequality and Ass.~\ref{selec} for the first term at the right hand side and the induction hypothesis~\eqref{eq:induction} at the step $\ell$ with the r.v $P_\ell := P_k K^2_\ell(\xi_{n+1}^{\ell})$ for the second term.

Plugging~\eqref{eq:heredite1} and~\eqref{eq:heredite2} into~\eqref{eq:heredite} and using again Hölder inequality and Ass.~\ref{selec} we find that~\eqref{eq:induction} holds true at the step $k$. Hence~\eqref{eq:induction} holds true for all $k = 1,\ldots,L$. 
Finally, plugging~\eqref{eq:induction} into~\eqref{eq:L} with $P_k = K^2_k(\xi_{n+1}^{k})$ for all $k = 1,\ldots,L$ we get
\begin{align*} 
\bE_n [\| x_{n+1} - x_\star \|^2] & \leq  (1 + C\gamma^2) \| x_n - x_\star \|^2 + C \gamma^2  \\  
 &\phantom{=} 
  - (1 - \eta) \gamma^2 \bE_n \left[\sum_{k=1}^{L} \| \nabla g^\gamma_k \|^2 \right]. 
\end{align*} 
By the Robbins-Siegmund lemma \cite{robbins1971convergence}, used along with 
$(\gamma_n) \in \ell^2$, we get that $(\| x_n - x_\star \|)$ converges almost
surely, showing the first point. 

By taking the expectations at both sides of this inequality, 
we also obtain that $(\bE \|x_n - x_\star \|^2)$ converges, 
$\sup_n \bE \|x_n - x_\star \|^2 < \infty$, 
and $\bE \sum_n \gamma_{n+1}^2 
  \sum_{i=1}^L \| \nabla g_{i}^\gamma \|^2 < \infty$.  
As $\sup_n \bE \|x_n - x_\star \|^2 < \infty$, we have by 
Ass.~\ref{f-Lip} that $\sup_n \bE \| \nabla f_1 \|^2 < \infty$. 
Using Lem.~\ref{bnd-nf-ng} and iterating, we easily get that 
$\bE \sum_n \gamma_{n+1}^2 
  \sum_{i=1}^L \| \nabla f_{i} \|^2 < \infty$ for all $i$.  

Since
$\| \bar x_{n+1}^1 - x_n \| \leq \gamma \| \nabla f_1 \| + 
\gamma \| \nabla g_1^\gamma  \|$, we get that 
$\sum_n \bE \| \bar x_{n+1}^1 - x_n \|^2 < \infty$. By 
Borel-Cantelli's lemma, we get that $\bar x_{n+1}^1 - x_n \to 0$ almost surely. 
The almost sure convergence of $\bar x_{n+1}^i - x_n$ to zero is 
shown similarly, and the proof of Prop.~\ref{fejer} is concluded.

\bibliographystyle{IEEEtran}
\bibliography{math}

\begin{IEEEbiography}[{\includegraphics[width=1in,height =1.25in,clip,keepaspectratio]{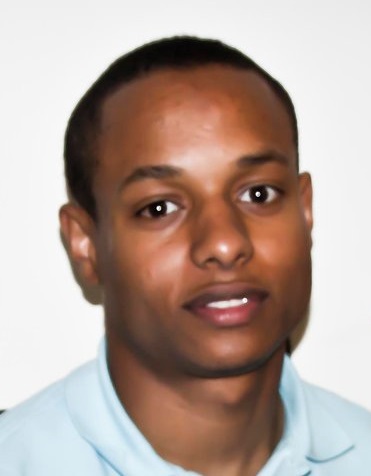}}]{Adil Salim} 
was born in 1991 in L'Hay-les-Roses, France. He received the M.Sc. degree of the University of Paris XI and the ENSAE ParisTech in 2015. Then he joined Telecom ParisTech – Université Paris-Saclay as a Ph.D. student in the Image, Data, Signal group. His research interests are focused
on stochastic approximation and monotone operator theory with special
emphasis on stochastic optimization algorithms and online learning algorithms.
\end{IEEEbiography} 

\begin{IEEEbiography}[{\includegraphics[width=1in,height =1.25in,clip,keepaspectratio]{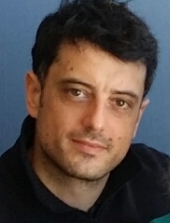}}]{Pascal Bianchi} 
was born in 1977 in Nancy, France. He received the M.Sc. degree of the University of Paris XI and Supélec in 2000 and the Ph.D. degree of the University of Marne-la-Vallée in 2003. From 2003 to 2009, he was with the Telecommunication Department of Centrale-Supélec. He is now working as a full Professor in the Statistics and Applications group at Telecom ParisTech – Université Paris-Saclay. His current research interests are in the area of numerical optimization, stochastic approximations, signal processing and distributed systems.
\end{IEEEbiography}

\begin{IEEEbiography}[{\includegraphics[width=1in,height =1.25in,clip,keepaspectratio]{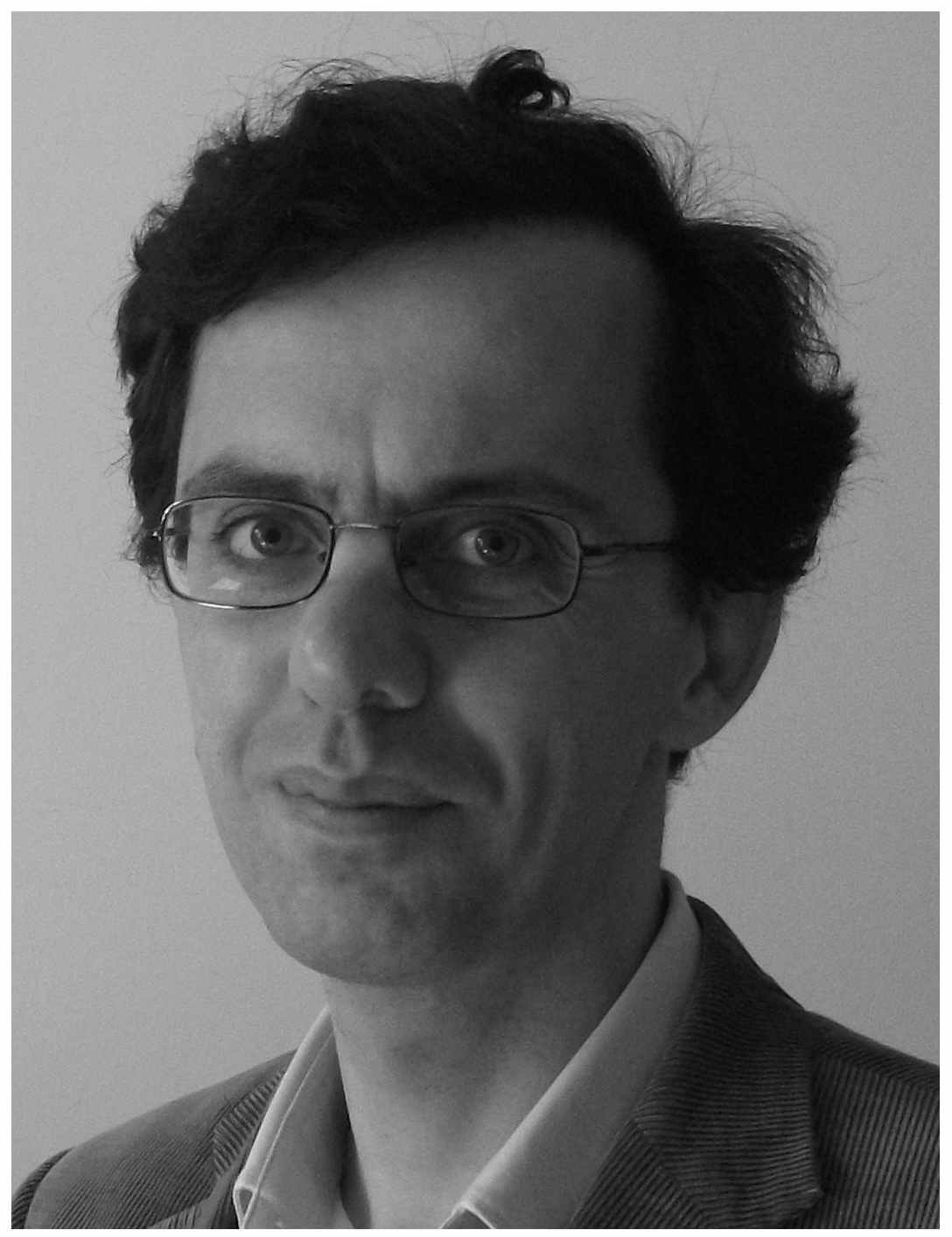}}]{Walid Hachem} 
was born in Bhamdoun, Lebanon, in 1967. He received the Engineering degree in
telecommunications from St Joseph University (ESIB), Beirut, Lebanon, in 1989,
the Masters degree from Telecom ParisTech, France, in 1990, the PhD degree in
signal processing from the Universit\'e Paris-Est Marne-la-Vall\'ee in 2000 
and the Habilitation \`a diriger des recherches from the Universit\'e Paris-Sud in 2006. \\
Between 1990 and 2000 he worked in the telecommunications industry as a signal
processing engineer. In 2001 he joined the academia as a faculty member at
Sup\'elec, France. In 2006, he joined the CNRS (Centre national de la Recherche
Scientifique), where he is now a research director based at the Universit\'e 
Paris-Est. 
His research themes consist mainly in the large random matrix theory and its
applications in statistical estimation and in communication theory, and in the
optimization algorithms in random environments.  \\ 
He served as an associate editor for the IEEE Transactions on Signal Processing
between 2007 and 2010. 
\end{IEEEbiography}

\end{document}